\crefname{equation}{}{}
\DeclareSymbolFont{symbolsC}{U}{pxsyc}{m}{n}
\DeclareMathSymbol{\medcircle}{\mathbin}{symbolsC}{7}
\crefname{algocf}{Algorithm}{Algorithms}
\crefname{equation}{}{} 
\colorlet{refkey}{orange!20}
\colorlet{labelkey}{blue!30}
\crefname{algocf}{Algorithm}{Algorithms}
\numberwithin{equation}{section}
\newtheorem{theorem}{Theorem}[section]
\newtheorem{lemma}[theorem]{Lemma}
\crefname{claim}{Claim}{Claims}
\newtheorem*{question*}{Question}
\newtheorem{fact}[theorem]{Fact}
\theoremstyle{definition}
\newtheorem{definition}[theorem]{Definition}
\newtheorem*{definition*}{Definition}
\theoremstyle{remark}
\newtheorem*{remark}{Remark}
\newcommand{\mb}{\mathbb}
\newcommand{\mbm}{\mathbbm}
\newcommand{\mc}{\mathcal}
\newcommand{\mr}{\mathrm}
\newcommand{\on}{\operatorname}
\let\originalleft\left
\let\originalright\right
\renewcommand{\left}{\mathopen{}\mathclose\bgroup\originalleft}
\renewcommand{\right}{\aftergroup\egroup\originalright}
\newif\ifpublic
\newcommand{\ignore}[1]{}
\title{Enumerating coprime permutations}
\author[A1]{Ashwin Sah}
\author[A2]{Mehtaab Sawhney}
\address{Department of Mathematics, Massachusetts Institute of Technology, Cambridge, MA 02139, USA}
\email{\{asah,msawhney\}@mit.edu}
\thanks{Sah and Sawhney were supported by NSF Graduate Research Fellowship Program DGE-1745302. Sah was supported by the PD Soros Fellowship.}
\begin{document}

\maketitle
\begin{abstract}
Define a permutation $\sigma$ to be coprime if $\gcd(m,\sigma(m)) = 1$ for $m\in[n]$. In this note, proving a recent conjecture of Pomerance, we prove that the number of coprime permutations on $[n]$ is $n!\cdot (c+o(1))^n$ where
\[c = \prod_{p\text{ prime }}\frac{(p-1)^{2(1-1/p)}}{p\cdot (p-2)^{(1-2/p)}}.\]
The techniques involve entropy maximization for the upper bound, and a mixture of number-theoretic bounds, permanent estimates, and the absorbing method for the lower bound.
\end{abstract}

\section{Introduction}\label{sec:introduction}

\begin{definition}\label{def:coprime-permutation}
Define a permutation $\sigma\colon[n]\to [n]$ to be \emph{coprime} if $\on{gcd}(m,\pi(m)) = 1$ for $m\in[n]$. Let $C(n)$ be the number of coprime permutations on $[n]$.
\end{definition}

In recent work, Pomerance \cite{Pom22} provided nontrivial bounds on $C(n)$ proving that $n!/3.73^n<C(n)<n!/2.5^n$ for $n$ sufficiently large. Pomerance then conjectured the existence of a constant $c$ such that $C(n) = n! (c+o(1))^n$ (see \cite[Section~5]{Pom22}); our main objective is to provide such an asymptotic.

\begin{theorem}\label{thm:main}
\[C(n) = n!\bigg(\prod_{p\emph{ prime }}\frac{(p-1)^{2(1-1/p)}}{p\cdot (p-2)^{(1-2/p)}} + \exp(-\Omega(\sqrt{\log n\log\log n}))\bigg)^n.\]
\end{theorem}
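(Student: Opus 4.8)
The plan is to prove matching upper and lower bounds, the upper bound by entropy maximization and the lower bound by a construction mixing permanent lower bounds, sieve theory, and absorbers. Throughout, fix a threshold $y=\exp(\Theta(\sqrt{\log n\log\log n}))$ and set $c_{\le y}=\prod_{p\le y}\frac{(p-1)^{2(1-1/p)}}{p\,(p-2)^{(1-2/p)}}$; since $\log(\text{factor}_p)=-1/p^2+O(\log p/p^3)$ one has $|\log c_{\le y}-\log c|=O(1/(y\log y))=\exp(-\Omega(\sqrt{\log n\log\log n}))$, so $c$ and $c_{\le y}$ are interchangeable up to the claimed error. For the upper bound, let $\sigma$ be uniformly random among coprime permutations, so $\log C(n)=H(\sigma)$. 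For $m\in[n]$ let $\mathrm{rad}_y(m)=\prod_{p\mid m,\,p\le y}p$; group $[n]$ into classes $T_v=\{m\le n:\mathrm{rad}_y(m)=v\}$, $n_v=|T_v|$, and set $\tau=(\mathrm{rad}_y(\sigma(1)),\dots,\mathrm{rad}_y(\sigma(n)))$. Split $H(\sigma)=H(\tau)+H(\sigma\mid\tau)$. Given $\tau$, the permutation $\sigma$ restricts to a bijection $\{m:\tau_m=v\}\to T_v$ for each $v$, so at most $\prod_v n_v!$ coprime permutations agree with a given $\tau$, whence $H(\sigma\mid\tau)\le\sum_v\log(n_v!)$; and revealing $\tau$ one prime at a time — for $p\le y$ the new data $(\mathbbm{1}[p\mid\sigma(m)])_m$ has exactly $|S_p|$ ones (where $S_p:=\{m\le n:p\mid m\}$), all lying outside $S_p$ by coprimality — gives, after dropping the conditioning on earlier primes, $H(\tau)\le\sum_{p\le y}\log\binom{n-|S_p|}{|S_p|}$.

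It remains to show $\sum_{p\le y}\log\binom{n-|S_p|}{|S_p|}+\sum_v\log(n_v!)=\log n!+n\log c_{\le y}+O(n\exp(-\Omega(\sqrt{\log n\log\log n})))$. One estimates the binomials by Stirling, writes $\sum_v\log(n_v!)=\sum_m\log(n_{\mathrm{rad}_y(m)}!)=\sum_m\log n_{\mathrm{rad}_y(m)}-n+O(\sum_v\log n_v)$, and estimates $n_{\mathrm{rad}_y(m)}=\#\{m'\le n:\mathrm{rad}_y(m')=\mathrm{rad}_y(m)\}$ by the fundamental lemma of sieve theory: the classes with $\mathrm{rad}_y(m)$ larger than a fixed power of $y$ are negligible by Rankin's trick, and for the rest the sieve error is $\exp(-\Omega(\sqrt{\log n\log\log n}))$ since its sifting parameter is $u=\log n/\log y\asymp\sqrt{\log n/\log\log n}$. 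In the resulting expression the $n\log n$ terms cancel, the $-n$ above supplies the constant term of $\log n!$, and the remaining linear term is exactly $n\sum_{p\le y}\big(2(1-1/p)\log(p-1)-\log p-(1-2/p)\log(p-2)\big)=n\log c_{\le y}$. (The threshold $y$ is chosen so that the sieve error and $|\log(c_{\le y}/c)|$ are both $\exp(-\Omega(\sqrt{\log n\log\log n}))$, which forces $\log y\asymp\sqrt{\log n\log\log n}$.) This proves the upper bound.

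For the lower bound it suffices to produce a random coprime permutation $\sigma$ with $H(\sigma)\ge\log n!+n\log c-O(n\exp(-\Omega(\sqrt{\log n\log\log n})))$, since $\log C(n)\ge H(\sigma)$. We build $\sigma$ in three stages: (i) using number-theoretic input, reserve a small, highly flexible absorbing structure, engineered to admit many coprime completions of essentially any valid partial permutation of the rest; (ii) match the remaining elements by a Sinkhorn-biased random greedy coprime matching — choosing each image uniformly among still-available coprime values, reweighted by the Sinkhorn column scalars of $(\mathbbm{1}[\gcd(m,j)=1])_{m,j}$, whose scaling is pinned down by Mertens-type estimates — with a Schrijver--van der Waerden-type permanent lower bound controlling the step entropies and yielding total entropy $\log n!+n\log c-O(n/(y\log y))$, where the $n/(y\log y)$ loss at primes above $y$ is precisely the rate forced by pseudorandomness of the process; (iii) absorb the $O(n\exp(-\Omega(\sqrt{\log n\log\log n})))$ leftover vertices using the reserved structure, at negligible entropy cost. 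Combining the stages gives $C(n)\ge n!\big(c-\exp(-\Omega(\sqrt{\log n\log\log n}))\big)^n$.

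The upper bound is a careful but essentially mechanical computation once the entropy split is set up; the real difficulty is the lower bound. There the constructed permutation must simultaneously be entropy-optimal to within $\exp(-\Omega(\sqrt{\log n\log\log n}))$ per element — which forces sharp permanent lower bounds together with precise analytic control of the Sinkhorn scaling — and pseudorandom enough that coprimality at the primes above $y$ is inherited at the right rate, with the absorber repairing the residual irregular elements without damaging either property. Making the permanent estimates, the sieve bounds, and the absorbing structure cooperate so that no single stage loses more than the target error is the heart of the argument.
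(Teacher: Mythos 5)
Your upper bound is correct in outline and takes a genuinely cleaner route than the paper. Where the paper counts permutations by their bucket profile $\beta_{S_1,S_2}$ over sets $A_S$ of shared small-prime divisors and then applies subadditivity of entropy to the pair $(S_1,S_2)$, you split $H(\sigma)=H(\tau)+H(\sigma\mid\tau)$ and bound $H(\tau)$ by revealing the indicator vectors $(\mathbbm{1}[p\mid\sigma(m)])_m$ one prime at a time; the observation that each such vector is supported on $[n]\setminus S_p$ with exactly $|S_p|$ ones gives $\log\binom{n-|S_p|}{|S_p|}$ per prime, and your leading-order arithmetic (the cancellation of the $\log p$ coefficients and the identity $\log c_{\le y}=\sum_{p\le y}[2(1-1/p)\log(1-1/p)-(1-2/p)\log(1-2/p)]$) checks out. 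The remaining sieve-theoretic estimation of $n_v$ plays the role of the paper's Lemma 2.1 (Bonferroni for small buckets, crude bounds for the $\ge k$-prime-factor tail) and is at the right level of difficulty; your choice of $y$ makes the fundamental-lemma error $e^{-u\log u}$ match the claimed rate.

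The lower bound, however, has a genuine gap: it is a description of what a proof would need to accomplish rather than an argument. Two steps carry essentially all of the difficulty and neither is supplied. First, the ``Sinkhorn-biased random greedy coprime matching'' with total entropy $\log n!+n\log c-O(n/(y\log y))$ is precisely the edge-removal process that the paper explicitly declines to analyze (Section 1.2 of the paper raises it only as a speculative route to sharper bounds); you give no mechanism for controlling it for $n-o(n)$ steps, no proof that the Sinkhorn scaling of the coprimality matrix factors over primes to the stated accuracy (the matrix is not a tensor product over primes because of the primes above $y$ and boundary effects, so ``pinned down by Mertens-type estimates'' is an assertion, not a derivation), and no actual invocation of a permanent inequality in a form that yields a per-step entropy bound. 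Second, the absorber you posit --- a structure admitting ``many coprime completions of essentially any valid partial permutation of the rest'' --- is far stronger than anything constructed here, and its existence is not obvious; by contrast the paper's absorption is concrete and soft: the $\exp(-\Omega(\sqrt{\log n\log\log n}))\cdot n$ leftover elements are dumped into the huge bucket of integers with no prime factor in $(2,W]$ (of size $\Omega(n/\log W)$), and a minimum-degree perfect-matching count (Alon--R\"odl--Ruci\'nski via Egorychev--Falikman) finishes, with the missing edges from primes $>W$ controlled by the elementary bound $\sum_{p\mid\ell,\,p>W}1/p<W^{-1/2}$. Relatedly, you never address the prime $2$, where the target distribution is degenerate (evens must map exactly onto odds); the paper handles this by Pomerance's reduction to bijections $\{1,3,\dots,2n-1\}\to[n]$ before running the bucket argument. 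A workable repair of your lower bound would replace stages (i)--(iii) with the paper's template-plus-buckets scheme: fix the entropy-maximizing marginals $\beta_{S_1,S_2}$, randomly partition each bucket accordingly, verify via Chernoff that each induced coprimality bipartite graph is missing only an $O(W^{-1/2})$ fraction of edges at each vertex, and count matchings bucket by bucket.
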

\begin{remark}
The $p = 2$ term is assumed to be $1/2$. The same method demonstrates that for fixed $k$, $C_k(n)$, which counts permutations $\sigma\colon[n]\to[n]$ with $\gcd(\ell,\sigma(\ell),k!) = 1$ (see \cite[Section~5]{Pom22}), satisfies
\[C_k(n) = n!\bigg(\prod_{p\le k}\frac{(p-1)^{2(1-1/p)}}{p\cdot (p-2)^{(1-2/p)}}\bigg)^n\cdot\exp(O(\log n)),\]
though we forgo the details. McNew \cite{McN22} has independently achieved similar results for fixed $k$.
\end{remark}

\cref{thm:main} is certainly not the only result regarding finding mappings between sets of integers such that the labels of the matched edges are coprime. One such example is work of Pomerance and Selfridge \cite{PS80}, answering a question of Newman, demonstrating that such a coprime matching exists between any set of $n$ consecutive integers $A$ and $[n]$. Another is work of Bohman and Peng \cite{BP21}, in connection with the lonely runner conjecture, proving that any two intervals $A,B$ of length $2m$ in $[n]$ with $m = 2^{\Omega((\log\log n)^2)}$ have a coprime matching. This was strengthened in later work of Pomerance \cite{Pom21} to require only $m = \Omega((\log n)^2)$. We also mention that enumerating $C(n)$ was also considered by Jackson \cite{Jac77} and is listed as A005326 on the OEIS \cite{OEIS}.

\subsection{Proof techniques}\label{sub:techniques}
For the upper bound we study $C(n)$ by looking at permutations where $\gcd(j,\pi(j))$ has no prime divisor at most a growing function $W$. Defining $\tau = \prod_{p\le W}p$ and examining the portion of integers $j$ which are $a\bmod\tau$ and map to $b\bmod\tau$, we find that the problem essentially immediately reduces to an \emph{entropy maximization} program. Using the concavity and subadditivity of entropy as well as the Chinese Remainder Theorem, we can obtain the upper bound, which is done in \cref{sec:upper-bound}. Note that as written, this approach essentially necessarily requires $W = O(\log n)$ as otherwise $\tau>n$ and the integers $\{1,\ldots,n\}$ are not equidistributed in residue classes $\bmod\tau$. In order to do better, we (a) restrict attention only to whether any prime $p\le W$ divides or does not divide an integer $j\in[n]$, and (b) focus only on possible prime divisor patterns which produce large buckets (specifically, buckets which have at most $k\approx(\log n)^{1/2}$ prime divisors), and treat the rest as an error term. Since $\prod_{p\le W}(1-p^{-1}) = \Theta(1/\log W)$ is a negligible factor, the condition on $W$ we obtain is roughly that $W^k\le n^{1/100}$, which allows for consideration of significantly more prime divisors.

For the proof of the lower bound in \cref{sec:lower-bound}, the role of larger prime factors immediately becomes crucial. Let $B_S$ be the set of integers in $[n]$ with $S$ being the precise set of prime divisors at most $W$. For the sake of this discussion $W = \exp(c(\log n)^{1/2}(\log\log n)^{1/2})$ and $k = (\log n)^{1/2}(\log\log n)^{-1/2}$. Our proof is based on first taking a ``template'', assigning the appropriate ``entropy-maximizing'' fraction of $B_S$ to map to $B_{S'}$. Noting the size of $W$ it follows immediately that (say) \[\sum_{\substack{p|n\\p>W}}1/p \le W^{-1/2}\]
as $n$ has at most $2\log n/(\log\log n)$ distinct prime divisors. This suggests that the effect of ``large primes'' is minimal and therefore the upper bound proof should be reversible. 

To make this argument rigorous, one chooses random subsets of $B_S$ to map to $B_{S'}$ in accordance with the entropy maximizing distribution. One then proves that with high probability the associated coprimality graph corresponding to $B_S,B_{S'}$, if $S$ and $S'$ have size at most $k$, has at most approximately a $W^{-1/2}$ fraction of edges missing at any vertex. This is done via the Chernoff bound and the prior estimate regarding large prime factors. Combined with estimates regarding the number of perfect matchings in graph of high minimum degree due to Alon, R\"odl, and Ruci\'nski \cite{ARR98}, one can (essentially) handle matching all integers with at most $k$ prime factors at most $W$.

For the remaining integers, we ``absorb'' this small subset into the set of integers divisible by no primes at most $W$, which is a very large set, namely of size $\Omega(n/\log W)$. On the other hand, the number of integers with at least $k$ prime factors less than $W$ is $n\exp(-\Omega(k\log k))$. See \cite{Sze13} for a more extensive discussion of the absorbing method and its history. Our approach here can be thought of as analogous to a ``robust absorption strategy'', which has appeared for instance in work of Montgomery \cite{Mon19}, in which bipartite graphs with a ``resilient matching'' property are utilized. Finally, we note that we have omitted certain special considerations at the prime $p=2$, which are handled via a reduction of Pomerance \cite{Pom22} (\cref{lem:even-reduction}).

\subsection{Further directions}\label{sub:further}
We end by mentioning that we believe obtaining sharper asymptotics for $C(n)$ is an interesting open problem; our methods are fundamentally limited to error terms that are of the form $n^{-o(1)}$ as we crucially rely on considering buckets whose shared prime divisors multiply to less than $n^{1/2}$ (say) in order to guarantee that the interval has the expected number of elements in such buckets. Thus either we cannot consider many primes, or cannot consider buckets with many prime divisors, either of which is a serious issue. Given this, error terms of the form $O(n^{-\Omega(1)})$ would require a substantial modification of our methods and even the truth of such a statement is not obvious. Additionally, obtaining an asymptotic expansion for $C(n)$ appears unlikely (as speculated by Jackson \cite{Jac77}).

We note that a reduction of Pomerance \cite{Pom22} shows it is enough to count coprime bijections $f\colon\{1,3,\ldots,2n-1\}\to[n]$. This is the number of matchings in a bipartite graph $i\in\{1,\ldots,2n-1\}$ on the left and $j\in[n]$ on the right are connected if $\gcd(i,j) = 1$. Our proof can be interpreted as considering the non-edges that occur due to small primes and noting that the ``remaining non-edges'' are sparse. For sharper bounds, it may be possible to analyze an edge-removal process that targets the maximum entropy distribution described in \cref{sub:techniques}: at each step, uniformly randomly choose an edge $(i,j)$ in the current graph with probability say proportional to some $g(i,j)$ and then delete $i$ from the left and $j$ from the right. Here we might take
\[g(i,j) = \prod_{2<p\le n}g_p(\mbm{1}_{p|i},\mbm{1}_{p|j})\]
with $g_p(0,0) = (p-2)/p$, $g_p(0,1) = g_p(1,0) = 1/p$, and $g_p(1,1) = 0$. Depending on how long one can control this process, the remaining vertices can be handled by the absorbing method. Our proof can be thought of as very similar to analyzing such a process where we truncate the weights at $p\le W$.

Techniques involving ``weighted removal processes'' or that target an ``entropy-maximizing distribution'' along with the absorbing method have been recently used to give sharp estimates on the $n$-queens problem \cite{Sim21}, and may be applicable to further situations where one wishes to count objects with structured biases (such as number-theoretic considerations, as in the present work). This in turn builds on recent powerful techniques introduced for counting in quasirandom situations such as counting combinatorial designs \cite{Kee18}, in which polynomially strong error terms are known. It remains interesting to see the limits of such techniques in number-theoretic settings.

\subsection*{Notation}
Throughout this paper we will reserve $p$ for variables which range over the primes. We will also implicitly assume $n$ is sufficiently large for certain inequalities to hold. We write $a = b\pm c$ to mean that $a\in[b-c,b+c]$, and $f = O(g)$ means $|f|\le Cg$ for some absolute constant $C$ while $f = \Omega(g)$ means $f\ge C|g|$ for some absolute $C$. We use $\log^{(k)}n$ to denote the $k$-times iterated logarithm, e.g. $\log^{(5)}n = \log\log\log\log\log n$. We may sometimes omit floors and ceilings without comment if not essential to the situation. We say an event occurs whp (with high probability) if it occurs with probability tending to $1$ as $n\to\infty$. Given a set $S$, we write $\binom{S}{k},\binom{S}{\le k},\binom{S}{< k}$ for the collection of subsets of $S$ of size $k$, size at most $k$, and size less than $k$, respectively. Finally, as above, we often refer to a bijective function or permutation as a matching between the domain and codomain, especially when dealing with extra constraints encoded via a graph.

\subsection*{Acknowledgements}
We thank Carl Pomerance for comments motivating our proof of \cref{lem:bound-size} and for communicating the results of McNew \cite{McN22} regarding $C_k(n)$.

\section{Upper Bound}\label{sec:upper-bound}
\subsection{Setup}\label{sub:upper-setup}
Fix $W = \exp(2^{-10}(\log n\log\log n)^{1/2})$ and $k =  2^{-5}(\log n/\log\log n)^{1/2}$. Let $\alpha = \exp(-k\log k)$, which will roughly correspond to the error term we derive. Our upper bound on the number of coprime permutations $\sigma\colon[n]\to[n]$ comes via enumerating permutations such that $p\nmid\gcd(j,\sigma(j))$ for all $p\le W$. Let $\mc{P}_W = \{p\colon p\le W\}$ be the primes which are at most $W$. 

We first define a series of sets with respect to the divisibility structure of $\mc{P}_W$. For each $S\subseteq\mc{P}_W$, let 
\[A_S = [n] \cap \bigcap_{p\in S}\{m\colon p|m\}\cap\bigcap_{p\in  \mc{P}_W\setminus S}\{m\colon p\nmid m\}\]
and 
\[A_{\ge k} = [n]\setminus\bigg(\bigcup_{S\in \binom{\mc{P}_W}{<k}}A_S\bigg).\]

To start, we prove that $A_S$ has the ``predicted size'' from an independent heuristic on small primes if $|S| < k$, while $A_{\ge k}$ is a vanishing small proportion of the set. This will allow us to efficiently encode the divisibility structure of all primes at most $W$. Note that this is nontrivial as a more naive bound based on the Chinese Remainder Theorem (CRT) would restrict us to having $W$ such that $\prod_{p\le W}p\le n^{1/2}$ (say) and hence $W\le\log n$. 

\begin{lemma}\label{lem:bound-size}
For all $S\subseteq \mc{P}_W$ with $|S|<k$ we have
\[|A_S| = (1\pm\alpha^2)n\prod_{p\in S}p^{-1}\prod_{p\in\mc{P}_W\setminus S}(1-p^{-1}).\]
Furthermore,
\[|A_{\ge k}|\le n\alpha^{1/2}.\]
\end{lemma}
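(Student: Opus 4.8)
The plan is to prove both estimates by inclusion-exclusion over the divisibility conditions, using the fact that for any product $d$ of distinct primes we have $|\{m\in[n]\colon d\mid m\}| = n/d + O(1) = (n/d)(1\pm d/n)$, and then controlling the resulting error sum using the smallness of $W^k$ relative to $n$. First I would fix $S$ with $|S|<k$ and write, via inclusion–exclusion over subsets $T\subseteq\mc{P}_W\setminus S$ of primes we additionally force to divide $m$,
\[
|A_S| = \sum_{T\subseteq\mc{P}_W\setminus S}(-1)^{|T|}\Big\lfloor \frac{n}{\prod_{p\in S}p\cdot\prod_{p\in T}p}\Big\rfloor.
\]
Replacing each floor by $n/(\prod_{p\in S}p\prod_{p\in T}p) + \theta_T$ with $|\theta_T|\le 1$, the main term sums (by factoring the product over $T$) to exactly $n\prod_{p\in S}p^{-1}\prod_{p\in\mc{P}_W\setminus S}(1-p^{-1})$, which is the claimed main term. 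So everything reduces to bounding the total error $\sum_{T\subseteq\mc{P}_W\setminus S}|\theta_T| \le 2^{|\mc{P}_W|} = 2^{\pi(W)}$.

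The key point is that this error bound $2^{\pi(W)}$ must be shown to be at most $\alpha^2 n\prod_{p\in S}p^{-1}\prod_{p\in\mc{P}_W\setminus S}(1-p^{-1})$. Using $\pi(W) = O(W/\log W)$, we get $2^{\pi(W)} = \exp(O(W/\log W))$, while $W = \exp(2^{-10}(\log n\log\log n)^{1/2})$ is far smaller than any power of $n$, so $2^{\pi(W)} = n^{o(1)}$; in fact $2^{\pi(W)}\le n^{1/100}$ for $n$ large. Meanwhile $\prod_{p\in S}p^{-1}\ge W^{-k}\ge n^{-1/100}$ (this is exactly the arithmetic constraint $W^k\le n^{1/100}$ flagged in the introduction, coming from $\log W\cdot k = 2^{-15}\log n$), and $\prod_{p\le W}(1-p^{-1}) = \Theta(1/\log W)\ge n^{-o(1)}$ by Mertens. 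Hence the main term is $\ge n^{1-1/100-o(1)}\cdot(\text{something})$, which dominates $n^{1/100}$, and since $\alpha^2 = \exp(-2k\log k) = n^{-o(1)}$ is itself only a mild factor, after checking the exponents carefully one gets $2^{\pi(W)}\le \alpha^2 n\prod_{p\in S}p^{-1}\prod_{p\in\mc{P}_W\setminus S}(1-p^{-1})$. Rather than a crude inclusion–exclusion that loses $2^{\pi(W)}$, it may be cleaner to truncate: only expand over $T$ with $\prod_{p\in T}p\le\sqrt n$, bounding the tail (where the product exceeds $\sqrt n$) directly by $n/\sqrt n$ times the number of squarefree integers up to... — but I expect the crude bound already suffices given how much room there is, so I would use it.

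For the second statement, $|A_{\ge k}|$ counts $m\in[n]$ divisible by at least $k$ distinct primes from $\mc{P}_W$. I would bound this by a union bound: $|A_{\ge k}| \le \sum_{T\in\binom{\mc{P}_W}{k}}|\{m\in[n]\colon p\mid m\ \forall p\in T\}| \le n\sum_{T\in\binom{\mc{P}_W}{k}}\prod_{p\in T}p^{-1} \le \frac{n}{k!}\Big(\sum_{p\le W}p^{-1}\Big)^k$. By Mertens, $\sum_{p\le W}p^{-1} = \log\log W + O(1) \le 2\log\log W$, and $\log\log W \le \log((\log n\log\log n)^{1/2}) \le \log\log n$, so the bound is at most $\frac{n(2\log\log n)^k}{k!}$. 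Using $k! \ge (k/e)^k$ and $k = 2^{-5}(\log n/\log\log n)^{1/2}$, we have $k/(2e\log\log n) \ge (\log n)^{1/2}/(\log\log n)^{3/2}$ for large $n$, which tends to infinity; hence $\frac{(2\log\log n)^k}{k!} \le (2e\log\log n/k)^k \le \exp(-\Omega(k\log k)) = \alpha^{\Omega(1)}$, and one checks the constant is good enough to give $\le\alpha^{1/2}$. The main obstacle is purely bookkeeping: tracking the various $n^{o(1)}$ factors ($2^{\pi(W)}$, the Mertens product, $\alpha^2$, $W^k$) with enough care that the stated $(1\pm\alpha^2)$ and $\alpha^{1/2}$ thresholds genuinely hold — there is no conceptual difficulty, only the need to verify the exponent inequalities $\log W\cdot k \le \tfrac{1}{100}\log n$ and $k\log k \ge 2\log(1/\alpha^{1/2})$ with the given constants $2^{-10}$ and $2^{-5}$.
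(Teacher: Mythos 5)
Your bound for $|A_{\ge k}|$ (union bound over $k$-subsets of primes, then Mertens and Stirling) is exactly the paper's argument and is fine. The first half, however, contains a fatal quantitative error: the claim that $2^{\pi(W)}=n^{o(1)}$ is false. Since $\log W = 2^{-10}(\log n\log\log n)^{1/2}\gg\log\log n$, the cutoff $W$ exceeds every fixed power of $\log n$; hence $\pi(W)\ge W/(2\log W)$ exceeds every fixed power of $\log n$ as well, and $2^{\pi(W)}=\exp(\Omega(W/\log W))$ exceeds every fixed power of $n$. (The fact that $W=n^{o(1)}$ does not make $2^{\pi(W)}$ subpolynomial --- you would need $W/\log W=O(\log n)$, i.e.\ $W=O(\log n\log\log n)$, and $W$ is far larger than that.) So the accumulated floor errors in the full inclusion--exclusion swamp the main term, which is only about $n^{1-1/100-o(1)}$. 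This is precisely the obstruction the paper flags right before the lemma: naive CRT/inclusion--exclusion over all of $\mc{P}_W$ would force $W\le\log n$, and the whole point of the lemma is to handle a much larger $W$.

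The repair, which you mention as an optional refinement and then discard, is in fact mandatory, and it is what the paper does: truncate the inclusion--exclusion using the Bonferroni inequalities at level $|T|\le 4k$. Then only $\binom{\pi(W)}{\le 4k}\le(2W)^{4k}=\exp(O(k\log W))\le n^{1/8}$ terms appear, so the total floor error is at most $(2W)^{4k}$, negligible against the main term; and the Bonferroni tail (the sum over $|T|=4k$) is controlled by the same $\frac{1}{(4k)!}\big(\sum_{p\le W}p^{-1}\big)^{4k}=\exp(-\Omega(k\log k))$ computation you already used for $A_{\ge k}$, which after comparing with $\prod_{p\in\mc{P}_W\setminus S}(1-p^{-1})=\Omega(1/\log W)$ gives the relative error $\alpha^2$. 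Note also that your alternative suggestion --- truncating by the condition $\prod_{p\in T}p\le\sqrt n$ rather than by $|T|$ --- does not directly produce valid two-sided bounds, since the Bonferroni inequalities require stopping the alternating sum at a fixed level $|T|=j$; the level-$4k$ truncation is the correct one.
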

\begin{proof}
We first handle $A_{\ge k}$. Let $Y$ be a uniformly random positive integer in $[n]$ and $Y_p$ be the indicator of $p|Y$ for $1\le p\le W$. Note that 
\begin{align}
\mb{P}[Y\in A_{\ge k}]&\le \sum_{1\le p_1<\cdots<p_k\le W}\mb{P}\bigg[\bigcap_{1\le j\le k}Y_{p_j}\bigg]=\sum_{1\le p_1<\cdots<p_k\le W}\mb{P}\bigg[\bigwedge_{1\le j\le k}(p_j|Y)\bigg]\notag\\
&\le 2\sum_{1\le p_1<\cdots<p_k\le W}\prod_{1\le j\le k}\frac{1}{p_j}\le \frac{2}{k!}\bigg(\sum_{1\le p\le W}\frac{1}{p}\bigg)^{k}\le\exp(-k\log k/2)\label{eq:small-set-bound}
\end{align}
where we have used that $W^k\le n$ to derive the probability that $Y$ is divisible by $\prod_{1\le j\le k}p_j$ and used that $\sum_{1\le p\le W}1/p = O(\log\log n)$. To derive the necessary bound on $A_S$, we proceed via the Principle of Inclusion-Exclusion (PIE), in particular with the use of Bonferroni's inequalities \cite{Bon36}. In particular, we have
\begin{align*}
\mb{P}[Y\in A_S]& = \mb{E}\bigg[\prod_{p\in S}Y_{p}\prod_{p\in \mc{P}_W\setminus S}(1-Y_{p})\bigg]\\
&=\sum_{0\le j\le 4k}(-1)^{j}\sum_{T\subseteq \binom{\mc{P}_W\setminus S}{j}}\mb{E}\bigg[\prod_{p\in S}Y_{p}\prod_{p\in T}Y_p\bigg] \pm \sum_{T\subseteq \binom{\mc{P}_W\setminus S}{4k}}\mb{E}\bigg[\prod_{p\in S}Y_{p}\prod_{p\in T}Y_{p}\bigg]\\
&= \sum_{0\le j\le 4k}(-1)^{j}\sum_{T\subseteq \binom{\mc{P}_W\setminus S}{j}}\prod_{p\in S\cup T}p^{-1}\pm \bigg(\bigg(\prod_{p\in S}p^{-1}\bigg)\exp(-3k\log k) + (2W)^{4k}/n\bigg)\\
&= \prod_{p\in S}p^{-1}\prod_{p\in\mc{P}_W\setminus S}(1-p^{-1})\pm \bigg(\bigg(\prod_{p\in S}p^{-1}\bigg)\exp(-3k\log k) + (2W)^{4k}/n\bigg)\\\\
& =  (1\pm \exp(-2k\log k))\prod_{p\in S}p^{-1}\prod_{p\in \mc{P}_W\setminus S}(1-p^{-1}).
\end{align*}
The first error term in the third line comes from a bound similar to \cref{eq:small-set-bound}, while the second error term in the third line comes from the slight deviations in probability that a $Y$ is divisible by some product of primes at most $W^k\le n$. The fourth line comes from bounding the tail terms of the product expansion of $\prod_{p\in S}p^{-1}\prod_{p\in\mc{P}_W\setminus S}(1-p^{-1})$ (e.g.~by use of the Bonferroni inequalities once again) and the fifth line follows upon converting additive error to multiplicative error, using $W^k\le n^{1/8}$ and $\prod_{p\in\mc{P}_W}(1-p^{-1}) = \Omega(1/\log W)$. This completes the proof.
\end{proof}

Given a coprime permutation $\sigma\colon[n]\to[n]$ and $S_1,S_2\subseteq \mc{P}_W$ we let
\[\beta_{S_1,S_2}(\sigma) = \#\{m\in[n]\colon m\in A_{S_1},~\sigma(m)\in A_{S_2}\}\]
and $\beta(\sigma)$ be the tuple of all such counts. Note there are at most $(2W)^k\le\exp(k\log k)\le n^{1/4}$ such sets $S$ with $|S|<k$ and that by \cref{lem:bound-size}, we have $|A_{\ge k}|\le n\alpha^{1/2}$. This implies that there are at most
\[(n+1)^{\sqrt{n}}\cdot(2n^2)^{n\alpha^{1/2}}\le e^{n\alpha^{1/3}}\]
possible choices for $\beta(\sigma)$: first, for every pair $S_1,S_2$ with $|S_1|,|S_2| < k$ we choose up to $n+1$ possibilities, and then we bound the ways to simultaneously choose all $\beta_{S,S'}(\sigma)$ for $S\subseteq\mc{P}_W$ and $|S'|\ge k$ (and similar for $\beta_{S',S}(\sigma)$), which is a total of $t\le n^2$ values which must sum to $|A_{\ge k}|$.

Therefore we have that
\begin{equation}\label{eq:upper-single}
C(n)\le e^{n\alpha^{1/3}}\cdot \sup_{\beta}\{\sigma\colon[n]\to[n]\text{ coprime}\colon\beta(\sigma)=\beta\}
\end{equation}

Finally we define the following parameters:
\[\rho_{S_1,S_2} = \beta_{S_1,S_2}/n,~\beta_{S_1,\cdot} = \sum_{S_2}\beta_{S_1,S_2} = |A_{S_1}|,~\beta_{\cdot,S_2} = \sum_{S_1}\beta_{S_1,S_2} = |A_{S_2}|.\]

\subsection{Completing the upper bound}\label{sub:upper-bound}
We now bound the number of choices of $\sigma$ given $\beta(\sigma)$. This will complete the proof of the upper bound due to \cref{eq:upper-single}. We bound the number of permutations $\sigma$ satisfying $\beta(\sigma) = \beta$ as follows:
\begin{itemize}
\item For each pair of subsets $S_1,S_2\subseteq \mc{P}_W$, choose which $\beta_{S_1,S_2}$ elements of $A_{S_1}$ which have image in $A_{S_2}$. Call such elements $V_{S_1\to S_2}$.
\item For each pair of subsets $S_1,S_2\subseteq \mc{P}_W$, choose which $\beta_{S_1,S_2}$ elements of $A_{S_2}$ which have preimage in $A_{S_1}$. Call such elements $V_{S_1\leftarrow S_2}$.
\item Determine a precise mapping between $V_{S_1\to S_2}$ and $V_{S_1\leftarrow S_2}$.
\end{itemize}
Note also that if $\rho_{S_1,S_2}\neq 0$ where $S_1\cap S_2\neq\emptyset$, then the expression within the supremum in \cref{eq:upper-single} is actually $0$. Thus we can restrict attention to values of $\beta$ where $\rho_{S_1,S_2} = 0$ when $S_1\cap S_2\neq\emptyset$.

Given this procedure, we see that the number of $\sigma$ such that $\beta(\sigma) = \beta$ is bounded by 
\begin{align}
&\prod_{S_1,S_2\subseteq \mc{P}_W}\beta_{S_1,S_2}!\prod_{S_1\subseteq \mc{P}_W}\frac{\beta_{S_1,\cdot}!}{\prod_{S_2\subseteq \mc{P}_W}\beta_{S_1,S_2}!}\prod_{S_2\subseteq \mc{P}_W}\frac{\beta_{\cdot,S_2}!}{\prod_{S_1\subseteq \mc{P}_W}\beta_{S_1,S_2}!}\notag\\
&=\prod_{S_1,S_2\subseteq \mc{P}_W}(\beta_{S_1,S_2}!)^{-1}\prod_{S_1\subseteq \mc{P}_W}\beta_{S_1,\cdot}!\prod_{S_2\subseteq \mc{P}_W}\beta_{\cdot,S_2}!\notag\\
&\le |A_{\ge k}|!^2\prod_{|S_1|,|S_2|< k}(\beta_{S_1,S_2}!)^{-1}\prod_{|S_1|< k}\beta_{S_1,\cdot}!\prod_{|S_2|< k}\beta_{\cdot,S_2}!\notag\\
&\le e^{n\alpha^{1/3}}\cdot\prod_{|S_1|,|S_2|< k}(\beta_{S_1,S_2}!)^{-1}\prod_{|S|< k}|A_S|!^2\label{eq:factorial-upper}
\end{align}
where we used that factorials are log-convex in the third line to combine the terms for $|S|\ge k$, and used \cref{lem:bound-size} and Stirling's approximation in the fourth.

We now bound each term in the above product. Write $n_S = n\prod_{p\in S}p^{-1}\prod_{p\in\mc{P}_W\setminus S}(1-p^{-1})$. Noting that $n!\le 3(n+1)(n/e)^n$ for all $n\ge 1$ and $(2W)^k\le n^{1/4}$, and applying \cref{lem:bound-size} we find that
\begin{align*}
\prod_{|S|\le k}|A_S|!&\le e^{n\alpha^{1/3}}\cdot \prod_{|S|<k}((1+\alpha^2)n_S/e)^{(1+\alpha^2)n_S}\\
&\le e^{2n\alpha^{1/3}}\cdot\prod_{S\subseteq \mc{P}_W}\bigg((n/e)\prod_{p\in S}p^{-1}\prod_{p\in \mc{P}_W\setminus S}(1-p^{-1})\bigg)^{n\prod_{p\in S}p^{-1}\prod_{p\in \mc{P}_W\setminus S}(1-p^{-1})}\\
&= e^{2n\alpha^{1/3}}\cdot (n/e)^n\prod_{p\in \mc{P}_W}(1-1/p)^{n(1-1/p)}(1/p)^{n/p}
\end{align*}

Next noting that $n!\ge (n/e)^n$ for all $n\ge 1$, we find that
\begin{align*}
\prod_{|S_1|,|S_2|< k}(\beta_{S_1,S_2}!)^{-1}&\le e^n\prod_{|S_1|,|S_2|< k}{\beta_{S_1,S_2}}^{-\beta_{S_1,S_2}}\le e^{n\alpha^{1/3}}\cdot e^n\prod_{S_1,S_2\subseteq \mc{P}_W}{\beta_{S_1,S_2}}^{-\beta_{S_1,S_2}}\\
&= e^{n\alpha^{1/3}}\cdot e^n\exp\Big(-n\sum_{S_1,S_2}\rho_{S_1,S_2}\log\beta_{S_1,S_2}\Big)\\
&= e^{n\alpha^{1/3}}\cdot (e/n)^n\exp\Big(-n\sum_{S_1,S_2}\rho_{S_1,S_2}\log\rho_{S_1,S_2}\Big).
\end{align*}

Let $Z$ be the random variable which takes on the value $(S_1,S_2)$ with probability $\rho_{S_1,S_2}$. Treating each set $S_i$ as a vector in $\{0,1\}^{\mc{P}_W}$, we may equivalently view $Z = ((\mbm{1}_{p\in S_1}, \mbm{1}_{p\in S_2}))_{p\in \mc{P}_W}\in\{0,1\}^{2|\mc{P}_W|}$. Let $Z_p$ be $(\mbm{1}_{p\in S_1}, \mbm{1}_{p\in S_2})$ when $(S_1,S_2)$ is drawn according to $Z$, and note that $Z$ is determined upon knowing $Z_p$ for all $p\in\mc{P}_W$. By the subadditivity of entropy we therefore find that 
\[\exp\Big(-n\sum_{S_1,S_2}\rho_{S_1,S_2}\log\rho_{S_1,S_2}\Big)=\exp(nH(Z))\le\prod_{p\in\mc{P}_W}\exp(nH(Z_p)).\]
Note that $Z_p$ is supported on $(0,0), (1,0), (0,1)$ due to the constraint $\rho_{S_1,S_2} = 0$ whenever we have $p\in S_1\cap S_2$. Additionally, $\mb{P}[Z_p = (1,\cdot)] = \mb{P}[Z_p = (\cdot,1)] = \lfloor n/p\rfloor/n$ since $\sigma$ is a permutation and $|\{p|\ell\}\cap[n]| = \lfloor n/p\rfloor$. These constraints determine the law of $Z_p$ and therefore $H(Z_p)$. In particular, it follows that $\mb{P}[Z_p = (1,0)] = \mb{P}[Z_p = (0,1)] = 1/p\pm 1/n$ and $\mb{P}[Z_p = (0,0)] = 1-2/p \pm 2/n$.

Putting everything together into \cref{eq:factorial-upper}, this gives an upper bound of
\begin{align*}
&e^{7n\alpha^{1/3}}\cdot (n/e)^n\bigg(\prod_{p\in \mc{P}_W}(1-1/p)^{n(1-1/p)}(1/p)^{n/p}\bigg)^2\prod_{p\in \mc{P}_W}(1-2/p)^{-n(1-2/p)}(1/p)^{-2n/p}\\
&\qquad= e^{7n\alpha^{1/3}}\cdot (n/e)^{n} \prod_{p\in \mc{P}_W}(p-1)^{2(1-1/p)n}(p-2)^{-n(1-2/p)}p^{-n}\\
&\qquad\le e^{7n\alpha^{1/3} + n/W}\cdot (n/e)^{n} \prod_{p}(p-1)^{2(1-1/p)n}(p-2)^{-n(1-2/p)}p^{-n}
\end{align*}
where we have used that $(p-1)^{2(1-1/p)}(p-2)^{-(1-2/p)}p^{-1} = 1+O(1/p^2)$ which implies the inequality $\prod_{p>W}(p-1)^{2(1-1/p)}(p-2)^{-(1-2/p)}p^{-1} =  1 + O(1/(W\log W))\le\exp(1/W)$. The desired result follows immediately using our expressions for $\alpha, W$.

\section{Lower Bound}\label{sec:lower-bound}
\subsection{Setup and preliminaries}\label{sub:lower-setup}
Recall that we have set $W = \exp(2^{-10}(\log n\log\log n)^{1/2})$ and $k =  2^{-5}(\log n/\log\log n)^{1/2}$, as well as $\alpha = \exp(-k\log k)$.

For the lower bound, note that any coprime permutation must map even numbers to odd numbers, which are sets of approximately the same size. In order to witness this phenomenon, we use the following reduction of Pomerance \cite{Pom22}. Define $[n]_\mr{o} = \{1,3,\ldots,2n-1\}$, the first $n$ odd positive integers.
\begin{lemma}[{\cite[Lemma~1]{Pom22}}]\label{lem:even-reduction}
Let $C_0(n)$ be the number of bijective functions $f\colon[n]_\mr{o}\to[n]$ with $\gcd(j,f(j)) = 1$ for all $j\in\{1,3,\ldots,2n-1\}$. Then $C(2n) = C_0(n)^2$ and $C(2n+1)\ge 2C_0(n-1)^2$.
\end{lemma}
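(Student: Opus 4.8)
\textbf{Proof proposal for \cref{lem:even-reduction}.}

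This is a clean bijective/counting reduction, and the plan is to exhibit explicit bijections (or near-bijections) between the relevant sets of permutations. For $C(2n) = C_0(n)^2$, start with a coprime permutation $\sigma\colon[2n]\to[2n]$. The first observation is that $\gcd(m,\sigma(m))=1$ forces $\sigma$ to send each even number to an odd number; since $[2n]$ contains exactly $n$ even numbers and $n$ odd numbers, $\sigma$ must restrict to a bijection from the evens to the odds, and hence (as $\sigma$ is a bijection on $[2n]$) also to a bijection from the odds to the evens. So $\sigma$ decomposes as a pair $(\sigma_{\mathrm{eo}}, \sigma_{\mathrm{oe}})$ where $\sigma_{\mathrm{eo}}$ maps $\{2,4,\ldots,2n\}$ to $\{1,3,\ldots,2n-1\}$ and $\sigma_{\mathrm{oe}}$ maps $\{1,3,\ldots,2n-1\}$ to $\{2,4,\ldots,2n\}$, each bijectively, and the coprimality of $\sigma$ is equivalent to the coprimality of both pieces. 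The next step is to rescale: write $2a\mapsto b$ from $\sigma_{\mathrm{eo}}$ and note $\gcd(2a,b)=1 \iff \gcd(a,b)=1$ (using that $b$ is odd, so the factor of $2$ is irrelevant), giving a coprime bijection $\{1,\ldots,n\}\to[n]_{\mathrm o}$; similarly $\sigma_{\mathrm{oe}}$ with $a\mapsto 2b$ satisfies $\gcd(a,2b)=1\iff\gcd(a,b)=1$, giving a coprime bijection $[n]_{\mathrm o}\to[n]$. Identifying $\{1,\ldots,n\}$ with $[n]$, both pieces are counted by $C_0(n)$ (the first after composing with the inverse of the second coordinate, or directly: a coprime bijection $[n]\to[n]_{\mathrm o}$ is the inverse of a coprime bijection $[n]_{\mathrm o}\to[n]$, same count). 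Each choice is independent and the construction is reversible, so $C(2n)=C_0(n)^2$.

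For $C(2n+1)\ge 2C_0(n-1)^2$, the plan is to build coprime permutations of $[2n+1]$ from a pair of coprime bijections on smaller odd/even sets, at the cost of handling the two largest elements by hand. In $[2n+1]$ there are $n+1$ odd numbers and $n$ even numbers, so a coprime permutation cannot simply swap evens and odds. The idea is to fix the images of two specific elements — for instance, send $2n+1 \mapsto 1$ (or $1\mapsto 2n+1$, or use $2n-1$, exploiting that these large values are coprime to $1$ or to each other in a convenient way) so as to reduce to a situation with $n-1$ ``free'' evens and $n-1$ ``free'' odds, then apply the $C(2n)$-type decomposition to those. The factor of $2$ comes from a binary choice in how one pins down the exceptional elements (e.g.\ whether $1$ is the image of an odd or an even, or the choice of which of two valid pairings to use for the top elements), and each of the two independent coprime bijections among the remaining $n-1$ evens and $n-1$ odds contributes a factor $C_0(n-1)$. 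Since we are only constructing a subfamily of coprime permutations of $[2n+1]$, this yields the inequality $C(2n+1)\ge 2C_0(n-1)^2$ rather than an equality.

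The main obstacle — really the only nontrivial point — is getting the parity bookkeeping in the odd case exactly right: one must choose the two pinned values so that (a) they are genuinely coprime to their assigned images, (b) removing them leaves precisely $n-1$ evens and $n-1$ odds with no residual coprimality obstructions forced among the remaining elements, and (c) the two resulting sub-bijections are each honestly counted by $C_0(n-1)$ after the appropriate $2a\leftrightarrow a$ rescaling, with the claimed factor of $2$ corresponding to genuinely distinct (non-overlapping) families so that the bound is a clean sum. Everything else is the routine verification that $\gcd(2a,b)=\gcd(a,b)$ when $b$ is odd and that the decomposition of a permutation into its even-to-odd and odd-to-even parts is a bijection onto pairs of such bijections. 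Since this lemma is quoted verbatim from \cite{Pom22}, I would in practice simply cite it, but the above is how I would reconstruct the proof.
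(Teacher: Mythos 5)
The paper itself does not prove this lemma; it is quoted directly from \cite[Lemma~1]{Pom22}, so there is no in-paper argument to compare against. Judged on its own terms, your proof of $C(2n)=C_0(n)^2$ is correct and complete: evens must map to odds, hence odds to evens by counting, the rescaling $2a\leftrightarrow a$ preserves coprimality because the partner is odd, and the two halves are independent and recoverable from $\sigma$.

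The odd case, however, has a genuine gap, and it is exactly the point you flag as "the only nontrivial point." Your proposed pinnings do not balance the parities: $[2n+1]$ has $n$ evens and $n+1$ odds, so in any coprime permutation exactly one odd element $v$ maps to an odd element $u$ while the remaining $n$ odds biject onto the evens. Sending $2n+1\mapsto 1$ leaves the evens forced to biject onto $\{3,5,\dots,2n+1\}$, which is not an initial segment of odds and so is not counted by any $C_0(m)$; the transposition $1\leftrightarrow 2n+1$ is outright impossible (it would make two odds map to odds, leaving only $n-1$ odd targets for the $n$ evens); and pinning only \emph{two} elements can never leave $n-1$ evens and $n-1$ odds on both sides, since you must delete one even and two odds. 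The construction that works pins the \emph{three} largest elements: require $\sigma$ to permute $\{2n-1,2n,2n+1\}$ among themselves. One checks that exactly the two $3$-cycles on this set are coprime (no element may be fixed, since $\gcd(m,m)=m>1$ for each of them, while all cross pairs are coprime: consecutive integers, and two odd numbers differing by $2$). This supplies the factor $2$. What remains is $\{2,4,\dots,2n-2\}=2\cdot[n-1]$ and $\{1,3,\dots,2n-3\}=[n-1]_{\mr{o}}$, and requiring $\sigma$ to map each of these sets bijectively and coprimely onto the other reduces, via your $2a\leftrightarrow a$ rescaling, to two independent bijections each counted by $C_0(n-1)$. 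Since this only constructs a subfamily, $C(2n+1)\ge 2C_0(n-1)^2$ follows.
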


We will require slightly modified sets in order to capture the divisibility structures of $[n]$ and $[n]_\mr{o}$. Let $\mc{P}_W' = \mc{P}_W\setminus \{2\}$. For $S\subseteq \mc{P}_W'$ define 
\[B_S = [n]\cap \bigcap_{p\in S}\{m\colon p|m\}\cap \bigcap_{p\in  \mc{P}_W'\setminus S}\{m\colon p\nmid m\},~B_{\ge k} = [n]\setminus \bigcup_{S\in \binom{\mc{P}_W'}{< k}}B_S\]
and the analogous
\[C_S = [n]_\mr{o}\cap \bigcap_{p\in S}\{m\colon p|m\}\cap \bigcap_{p\in  \mc{P}_W'\setminus S}\{m\colon p\nmid m\},~C_{\ge k} = [n]_\mr{o}\setminus \bigcup_{S\in\binom{\mc{P}_W'}{< k}}C_S.\]
Similar to \cref{lem:bound-size}, we have the following control on the set sizes $B_S$, $C_S$.
\begin{lemma}\label{lem:bound-size-2}
For all $S\subseteq \mc{P}_W'$ with $|S|<k$ we have, 
\[|B_S|, |C_S| = (1\pm\alpha^2)n\prod_{p\in S}p^{-1}\prod_{p\in \mc{P}_W'\setminus S}(1-p^{-1}).\]
Furthermore,
\[|B_{\ge k}|,|C_{\ge k}|\le\alpha^{1/2}n.\]
\end{lemma}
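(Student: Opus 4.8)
\textbf{Proof plan for \cref{lem:bound-size-2}.}
The statement is essentially \cref{lem:bound-size} transported from the modulus $\mc{P}_W$ to the reduced prime set $\mc{P}_W'=\mc{P}_W\setminus\{2\}$, and carried out twice: once over the ground set $[n]$ (for the sets $B_S$, $B_{\ge k}$) and once over the arithmetic progression $[n]_{\mathrm o}=\{1,3,\dots,2n-1\}$ (for $C_S$, $C_{\ge k}$). The plan is to repeat the argument of \cref{lem:bound-size} with the obvious modifications, so I will only indicate where the two settings differ from that proof.

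For the $B_S$ bounds the proof is \emph{verbatim} the proof of \cref{lem:bound-size} with $\mc{P}_W$ replaced by $\mc{P}_W'$ throughout: one takes $Y$ uniform in $[n]$, lets $Y_p=\mbm 1_{p\mid Y}$ for $p\in\mc{P}_W'$, bounds $\mb P[Y\in B_{\ge k}]$ by a union bound over $k$-subsets of $\mc{P}_W'$ exactly as in \cref{eq:small-set-bound} (using $W^k\le n$ to evaluate the relevant divisibility probabilities as $\prod 1/p_j$ up to a factor $2$, and $\sum_{p\le W}1/p=O(\log\log n)$), and then runs the Bonferroni/PIE expansion to truncation depth $4k$ to get $|B_S|=(1\pm\alpha^2)\,n\prod_{p\in S}p^{-1}\prod_{p\in\mc{P}_W'\setminus S}(1-p^{-1})$, converting additive to multiplicative error using $W^k\le n^{1/8}$ and $\prod_{p\in\mc{P}_W'}(1-p^{-1})=\Omega(1/\log W)$ (the latter is unchanged since dropping the single prime $2$ only multiplies the product by $2$). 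No new idea is needed here.

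For the $C_S$ bounds one instead takes $Y$ uniform in $[n]_{\mathrm o}$. The only genuine difference is the input estimate ``$Y$ divisible by a given product $q=\prod_{j}p_j$ of distinct odd primes $\le W$ with probability $1/q\pm O(1/n)$''; here the relevant count is $\#\{m\le 2n-1:\ m\text{ odd},\ q\mid m\}=\#\{m\le 2n-1:\ 2q\mid m\ \text{or}\ q\mid m,\ 2\nmid m\}$, which by CRT (valid since $2q\le 2W^k\le n$) equals $n/q\pm O(1)$, so that $\mb P[q\mid Y]=1/q\pm O(1/n)$ exactly as in the $B$-case. With this observation in hand the union bound for $\mb P[Y\in C_{\ge k}]$ and the depth-$4k$ Bonferroni expansion for $\mb P[Y\in C_S]$ go through with the same error bookkeeping — the crude bound $(2W)^{4k}/n$ on the accumulated CRT errors is unchanged, and the tail-of-the-product term is controlled by Bonferroni as before — yielding $|C_S|=(1\pm\alpha^2)\,n\prod_{p\in S}p^{-1}\prod_{p\in\mc{P}_W'\setminus S}(1-p^{-1})$ and $|C_{\ge k}|\le\alpha^{1/2}n$.

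There is no real obstacle: the one point requiring (minor) care is checking that restricting to the odd integers does not disturb the equidistribution-in-residue-classes input, i.e.\ that $\#\{m\in[n]_{\mathrm o}: q\mid m\}=n/q+O(1)$ for every squarefree odd $q\le n$, which is immediate from CRT applied to the modulus $2q\le n$. Everything else is the proof of \cref{lem:bound-size} with $\mc{P}_W\rightsquigarrow\mc{P}_W'$, so we omit the repeated computation.
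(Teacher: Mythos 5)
Your proposal is correct and matches the paper's (implicit) approach: the paper states \cref{lem:bound-size-2} without proof, deferring to the argument of \cref{lem:bound-size}, and your adaptation correctly identifies the only genuinely new ingredient, namely that $\#\{m\in[n]_\mr{o}\colon q\mid m\}=n/q+O(1)$ for odd squarefree $q$ via CRT modulo $2q$. The rest of the bookkeeping (union bound for the $\ge k$ sets, depth-$4k$ Bonferroni expansion, conversion of additive to multiplicative error) carries over exactly as you describe.
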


For the remainder of this section we focus on providing an asymptotic lower bound on $C_0(n)$, which we treat as a matching problem between $[n]_\mr{o}$ and $[n]$ with forbidden edges corresponding to non-coprime pairs. Now, heuristically, the lower bound should match the bounding scheme presented in \cref{sec:upper-bound} (ignoring the prime $p = 2$). However, to actually ensure we avoid coprimality with respect to larger primes $p$, we will require a number of estimates. The crucial point for our lower bound is noting that the divisibility structure of primes larger than $W$ removes a sparse graph (of density bounded by say $W^{-1/2}$).

\begin{fact}\label{fact:prime-sum}
For $\ell\in [2n]$ we have that 
\[\sum_{\substack{p|\ell\\ p>W}} \frac{1}{p}<W^{-1/2}.\]
\end{fact}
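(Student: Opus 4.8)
The plan is to bound the number of prime divisors of $\ell$ that exceed $W$ and then sum the reciprocals. First I would observe that if $p_1 < p_2 < \cdots < p_r$ are the distinct primes dividing $\ell$ with each $p_i > W$, then $\prod_{i=1}^r p_i \le \ell \le 2n$, and since each $p_i > W$ we get $W^r < 2n$, hence $r < \log(2n)/\log W$. With $W = \exp(2^{-10}(\log n\log\log n)^{1/2})$ this gives $r = O((\log n/\log\log n)^{1/2})$, which is comfortably smaller than any fixed power of $W$.

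Next I would bound the sum: since each prime in the sum exceeds $W$, we have
\[
\sum_{\substack{p\mid\ell\\ p>W}}\frac1p < \frac{r}{W} < \frac{\log(2n)}{W\log W}.
\]
Then it remains to check that $\log(2n)/(W\log W) < W^{-1/2}$, i.e. that $W^{1/2}\log W > \log(2n)$. Since $W = \exp(2^{-10}(\log n\log\log n)^{1/2})$, we have $\log W = 2^{-10}(\log n\log\log n)^{1/2}$ and $W^{1/2} = \exp(2^{-11}(\log n\log\log n)^{1/2})$, which grows faster than any polynomial in $\log n$; in particular $W^{1/2}\log W \ge W^{1/2} \gg \log(2n)$ for $n$ large, so the inequality holds with room to spare.

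I do not anticipate any genuine obstacle here — this is a short estimate and the slack in the choice of $W$ makes the final numerical comparison trivial. The only mild point of care is the upkeep of constants: one wants to make sure the bound $r < \log(2n)/\log W$ combined with the crude $1/p < 1/W$ really does land below $W^{-1/2}$ rather than, say, $W^{-1}$ times a logarithmic factor that one then has to absorb; but because $W$ is super-polynomially large in $\log n$, even the very wasteful bound $\sum_{p\mid\ell,\,p>W} 1/p < (\log 2n)/W$ already beats $W^{-1/2}$, so there is nothing delicate. (One could alternatively invoke the standard fact that $\ell \le 2n$ has at most $(1+o(1))\log(2n)/\log\log(2n)$ distinct prime factors, as the paper does in the discussion in \cref{sub:techniques}, but the trivial counting bound above suffices and keeps the argument self-contained.)
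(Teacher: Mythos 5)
Your proposal is correct and follows essentially the same route as the paper: bound the number of prime divisors involved, note each term is less than $1/W$, and use that $W^{1/2}$ dwarfs any power of $\log n$. The only (minor) difference is that the paper invokes the standard bound of $2\log n/\log\log n$ on the total number of distinct prime divisors of $\ell\le 2n$, whereas you count only the primes exceeding $W$ via the trivial product bound $W^r<2n$ --- a slightly sharper and more self-contained version of the same step.
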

\begin{proof}
This is immediate noting that at most $2\log n/(\log\log n)$ distinct primes divide an integer $\ell\in [2n]$. 
\end{proof}

We will also require that for a given ``large class'' only a small portion of numbers are divisible by a fixed $p$. 
\begin{lemma}\label{lem:prime-division}
For $W<p'\le n^{1/3}$ and $|S|< k$, we have that 
\[|B_S\cap\{m\colon p'|m\}|\le 2|A_S|/p',~|C_S\cap\{m\colon p|m\}|\le 2|B_S|/p'.\]
For $p'>n^{1/3}$ we have that
\[|B_S\cap\{m\colon p'|m\}|\le 2n^{2/3},~|C_S\cap\{m\colon p|m\}|\le 2n^{2/3}.\]
\end{lemma}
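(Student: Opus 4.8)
\textbf{Proof plan for \cref{lem:prime-division}.}

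The plan is to count the integers in $B_S$ (resp. $C_S$) divisible by $p'$ by sieving and comparing to the count of $|A_S|$ (resp. $|B_S|$). First I would treat the ``small $p'$'' regime $W < p' \le n^{1/3}$. Fix $S \subseteq \mc{P}_W'$ with $|S| < k$. An integer $m \in [n]$ lies in $B_S \cap \{p' | m\}$ iff $p' | m$, every $p \in S$ divides $m$, and every $p \in \mc{P}_W' \setminus S$ fails to divide $m$; since $p' > W$, the prime $p'$ is coprime to $\prod_{p \in \mc{P}_W'} p$, so by the Chinese Remainder Theorem these conditions are genuinely independent modulo the product $p' \cdot \prod_{p \in \mc{P}_W'} p$. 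Writing $Q = \prod_{p \in S} p \cdot \prod_{p \in \mc{P}_W' \setminus S}\{\text{residues avoiding }p\}$ heuristically, the density of this set is $\frac{1}{p'}\prod_{p \in S} p^{-1} \prod_{p \in \mc{P}_W' \setminus S}(1 - p^{-1})$. The issue is only the error from $[n]$ not being a perfect union of residue classes modulo the full modulus $m^\ast := p'\prod_{p\in\mc{P}_W'}p$, and one must check $m^\ast$ is not too large: we have $\prod_{p \in \mc{P}_W'} p \le W^{|\mc{P}_W'|}$ is controlled (in fact $\log \prod_{p\le W} p = O(W)$, and $W$ is subpolynomial in $n$), and $p' \le n^{1/3}$, so $m^\ast = n^{o(1)}$. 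Thus I would run the same Bonferroni/PIE argument as in \cref{lem:bound-size}: expand $|B_S \cap \{p'|m\}|$ via inclusion–exclusion over $\mc{P}_W' \setminus S$ truncated at depth $4k$, bounding the tail as in \cref{eq:small-set-bound} and the CRT discrepancy by $m^\ast \le (2W)^{4k} n^{1/3} \le n^{1/2}$ (say). This yields $|B_S \cap \{p'|m\}| = (1 \pm \alpha^2)\frac{n}{p'}\prod_{p\in S}p^{-1}\prod_{p\in\mc{P}_W'\setminus S}(1-p^{-1})$, which combined with the lower bound $|A_S| \ge (1-\alpha^2) n \prod_{p \in S}p^{-1}\prod_{p\in\mc{P}_W\setminus S}(1-p^{-1}) \ge \frac12 |B_S|\cdot$ (an extra factor of $(1-1/2) = 1/2$ from the missing prime $2$) gives the claimed $\le 2|A_S|/p'$ with room to spare. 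The $C_S$ statement is identical with $[n]_\mr{o}$ in place of $[n]$: restricting to odd integers only introduces the extra modulus $2$, which is coprime to everything in play, so the same CRT/PIE bound applies and one compares to $|B_S|$.

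For the ``large $p'$'' regime $p' > n^{1/3}$, the bound is trivial: any $m \in [n]$ with $p' | m$ is of the form $p' j$ with $j \le n/p' < n^{2/3}$, so there are at most $n^{2/3} < 2n^{2/3}$ such $m$ in all of $[n]$, hence at most that many in $B_S$; the same counts $[n]_\mr{o}$, giving the bound for $C_S$.

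I expect the main (mild) obstacle to be bookkeeping the modulus size: one must verify $p' \cdot \prod_{p \le W} p$ stays comfortably below $n^{1/2}$ so that the CRT discrepancy term $(2W)^{4k}/n$-type error (now inflated by the factor $p'$) remains negligible against $\alpha^2 \cdot (n/p')\prod(\cdots)$. Since $\prod_{p\le W} p = e^{(1+o(1))W} = n^{o(1)}$ and $k, W$ are fixed subpolynomial functions of $n$, this is comfortably satisfied for $n$ large, but it is the one place where the constraint $p' \le n^{1/3}$ (rather than something larger) is being used. Everything else is a direct reprise of the PIE computation already carried out for \cref{lem:bound-size}, and the factor $2$ in the conclusion absorbs both the $(1\pm\alpha^2)$ multiplicative slack and the discrepancy between the $\mc{P}_W$-product (defining $|A_S|$) and the $\mc{P}_W'$-product (defining $|B_S|$, $|C_S|$).
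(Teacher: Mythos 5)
Your proposal is correct and follows essentially the same route as the paper: the large-$p'$ case is the same trivial count $|[2n]\cap\{m\colon p'\mid m\}|\le 2n^{2/3}$, and the small-$p'$ case is the same truncated Bonferroni/PIE expansion from \cref{lem:bound-size} with the extra indicator $Y_{p'}$ inserted, using $p'(2W)^{4k}\le n^{1/2}$ (the paper uses $n^{1/3}W^{4k}\le n^{1/4}$) to control the CRT discrepancy. Your additional remarks about the factor of $2$ absorbing both the $(1\pm\alpha^2)$ slack and the $\mc{P}_W$ versus $\mc{P}_W'$ normalization mismatch are a reasonable reading of the (slightly typo-laden) statement.
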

\begin{proof}
The second claim follows immediately as $|[2n]\cap  \{m: p'|m\}|\le 2n^{2/3}$. For the first claim, we focus on $B_S$ as the analogous claim for $C_S$ follows similarly. Let $Y$ and $Y_p$ be as in the proof of \cref{lem:bound-size}. Note that 
\begin{align*}
\mb{P}[Y\in A_S&\wedge Y_{p'}] = \mb{E}\bigg[Y_{p'}\prod_{p\in S}Y_{p}\prod_{p\in\mc{P}_W'\setminus S}(1-Y_{p})\bigg]\\
&=\sum_{0\le j\le 4k}(-1)^{j}\sum_{T\subseteq \binom{\mc{P}_W'\setminus S}{j}}\mb{E}\bigg[Y_{p'}\prod_{p\in S}Y_{p}\prod_{p\in T}Y_p\bigg] \pm \sum_{T\subseteq \binom{\mc{P}_W'\setminus S}{4k}}\mb{E}\bigg[Y_{p'}\prod_{p\in S}Y_{p}\prod_{p\in T}Y_{p}\bigg]\\
&= \sum_{0\le j\le 4k}(-1)^{j}\sum_{T\subseteq \binom{\mc{P}_W'\setminus S}{j}}(p')^{-1}\prod_{p\in S\cup T}p^{-1}\pm \bigg((p')^{-1}\bigg(\prod_{p\in S}p^{-1}\bigg)\exp(-3k\log k) + (2W)^{4k}/n\bigg)\\
& = (1\pm\alpha^2)(p')^{-1}\prod_{p\in S}p^{-1}\prod_{p\in\mc{P}_W'\setminus S}(1-p^{-1})
\end{align*}
where we have used that $n^{1/3}(W)^{4k}\le n^{1/4}$. The result follows immediately.
\end{proof}

We will also require that in any bipartite graph with balanced vertex parts and a very large minimum degree there are a large number of perfect matchings. The argument is identical to that of Alon, R\"odl, and Ruci\'nski \cite[Theorem~1]{ARR98}, but we reprove the result in order to quantify the dependencies.
\begin{lemma}\label{lem:matchings}
Let $G = (A\cup B, E)$ be a bipartite graph such that $|A|=|B|=n$ and $\Delta = \Delta(K_{A,B}\setminus G)$, the maximum degree of the bipartite complement, satisfies $\Delta\le n/3$. Then there are at least $((n-2\Delta)/e)^n$ perfect matchings in $G$.
\end{lemma}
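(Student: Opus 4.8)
The plan is to lower bound the permanent of the bipartite adjacency matrix of $G$ via the Egorychev--Falikman / Van der Waerden route combined with a doubly stochastic scaling, following Alon--R\"odl--R:uci\'nski. First I would reduce counting perfect matchings to estimating $\operatorname{per}(M)$ where $M\in\{0,1\}^{n\times n}$ is the biadjacency matrix of $G$; since every row and column of $M$ has at least $n-\Delta$ ones, $M$ entrywise dominates $\tfrac{n-\Delta}{n}J$ minus nothing useful directly, so instead I would pass to a doubly stochastic matrix. Concretely, consider $N = \tfrac{1}{n}M$ scaled appropriately: each row sum of $M$ lies in $[n-\Delta, n]$, and the same for columns, so $M$ supports a doubly stochastic matrix $D$ with $D_{ij}=0$ whenever $M_{ij}=0$ and $D_{ij}\ge \tfrac{1}{n}\cdot\tfrac{n-2\Delta}{?}$-type lower bounds on its positive entries. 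The cleanest way is: by the Birkhoff--von Neumann / max-flow argument, there is a doubly stochastic $D$ supported on the support of $M$ with every nonzero entry at least $1/n$ — this needs the minimum degree condition $\Delta \le n/3$ to guarantee feasibility of the associated transportation problem (Hall-type condition). Then by the Van der Waerden--Egorychev--Falikman theorem, $\operatorname{per}(D)\ge n!/n^n \ge e^{-n}$.

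Next I would convert a bound on $\operatorname{per}(D)$ into a bound on $\operatorname{per}(M)$. Since $D$ is supported on the support of $M$ and $M$ is $0/1$, we have $M_{ij}\ge D_{ij}$ is false in general; rather, I would instead directly build $D$ so that $M_{ij}\ge c\, D_{ij}$ for a suitable constant. The standard trick: write $D = \sum_\pi \lambda_\pi P_\pi$ as a convex combination of permutation matrices, each $P_\pi$ supported inside $\operatorname{supp}(M)$, hence each $\pi$ is a perfect matching of $G$; but this only recovers that $G$ has at least one perfect matching. To get the exponential count one uses the permanent bound multiplicatively: apply the expansion $\operatorname{per}(M) = \sum_j M_{1j}\operatorname{per}(M^{(1,j)})$ and a convexity/AM--GM argument, or — more efficiently — use that $\operatorname{per}(M)\ge \operatorname{per}(\tfrac{n-2\Delta}{n}\cdot(\text{doubly stochastic }D')) \cdot (\tfrac{n}{n-2\Delta})^{?}$. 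I expect the slick version is: take the doubly stochastic matrix $D$ on $\operatorname{supp}(M)$; then $M \ge (n-2\Delta)\,D$ entrywise is the inequality to aim for, because each positive entry of a well-chosen $D$ can be taken $\le 1/(n-2\Delta)$ while $M$'s entries are $1$. Granting that, multilinearity of the permanent gives $\operatorname{per}(M)\ge (n-2\Delta)^n \operatorname{per}(D)\ge (n-2\Delta)^n\cdot n!/n^n \ge ((n-2\Delta)/e)^n$, which is exactly the claim.

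So the key steps, in order, are: (1) restate the count of perfect matchings as $\operatorname{per}(M)$ for the $0/1$ biadjacency matrix; (2) use the minimum-degree hypothesis $\Delta\le n/3$ and a Hall/transportation argument to produce a doubly stochastic matrix $D$ supported inside $\operatorname{supp}(M)$ with all positive entries at most $1/(n-2\Delta)$ — equivalently $M \ge (n-2\Delta) D$ entrywise; (3) invoke the Van der Waerden bound $\operatorname{per}(D)\ge n!/n^n$; (4) combine via multilinearity of the permanent and $n!\ge (n/e)^n$ to get $\operatorname{per}(M)\ge ((n-2\Delta)/e)^n$.

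The main obstacle is step~(2): constructing the doubly stochastic matrix $D$ whose positive entries are uniformly bounded above by $1/(n-2\Delta)$, not merely some crude bound. This requires showing the relevant transportation polytope is nonempty, i.e.\ verifying a Hall-type deficiency condition where the minimum-degree bound $\Delta \le n/3$ (hence $n-2\Delta\ge n/3$) is used quantitatively; a convenient way is to exhibit $D$ explicitly as a mixture of the uniform matrix on each vertex's allowed neighborhood, or to run a short max-flow feasibility check showing every row/column can be "spread" over its $\ge n-\Delta$ available entries without exceeding the $1/(n-2\Delta)$ cap. Once feasibility is in hand everything else is a mechanical application of the permanent's multilinearity and classical inequalities.
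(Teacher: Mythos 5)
Your overall strategy is the same as the paper's: exhibit inside $G$ a structure that scales to a doubly stochastic matrix with support in the biadjacency matrix $M$ and all entries at most $1/(n-2\Delta)$, then apply the Egorychev--Falikman (van der Waerden) bound and multilinearity to get $\operatorname{per}(M)\ge (n-2\Delta)^n\, n!/n^n\ge ((n-2\Delta)/e)^n$. The paper realizes this via an integral $(n-2\Delta)$-factor (a spanning $(n-2\Delta)$-regular subgraph) obtained from \cite[Theorem~3]{ARR98}; your fractional version is equivalent by integrality of the bipartite degree-constrained subgraph polytope. Steps (1), (3), (4) of your outline are correct and routine.

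The genuine gap is exactly the step you flag as ``the main obstacle'': you never verify that the transportation polytope is nonempty, and this verification is where essentially all of the content of the proof lies --- it is also the only place the hypothesis $\Delta\le n/3$ is used. Concretely, feasibility of a (fractional or integral) $k$-factor with $k=n-2\Delta$ is equivalent to the deficiency condition that for all $X\subseteq A$, $Y\subseteq B$ one has $k(|X|+|Y|)+e(A\setminus X,B\setminus Y)\ge kn$. This is not a one-line Hall check: the paper proves it by noting $e(A\setminus X,B\setminus Y)\ge(n-|X|)(n-|Y|-\Delta)$, fixing $t=|X|+|Y|\le n$, using concavity in $|X|$ to reduce to the endpoints $(|X|,|Y|)=(t,0)$ and $(t/2,t/2)$, and then optimizing over $t$ to find the near-tight case $t=3\Delta$, where the inequality becomes $(n-2\Delta)(n-3\Delta)\le(n-3\Delta/2)(n-5\Delta/2)$, i.e.\ $\Delta n\ge 9\Delta^2/4$, which holds precisely because $n\ge 3\Delta$. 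None of your suggested shortcuts closes this cleanly: the ``mixture of uniform matrices on each vertex's neighborhood'' does not produce a doubly stochastic matrix supported on $\operatorname{supp}(M)$ (the natural correction term spills onto non-edges), and the ``max-flow feasibility check'' is precisely the deficiency inequality above, which must actually be proved. So your proposal identifies the right architecture but omits the one nontrivial argument; as written it does not constitute a proof.
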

\begin{proof}
By \cite[Theorem~3]{ARR98}, if for any $X\subseteq A$, $Y\subseteq B$,
\[k(|X|+|Y|) + E(A\setminus X,B\setminus Y)\ge kn\]
holds, then $G$ has a $k$-factor (and therefore a subgraph of $G$ which is $k$-regular). Let $k = n-2\Delta$ and note that for $|X|+|Y|\ge n$ the above inequality is immediate. If $|X|+|Y|\le n$, suppose that $|X|\ge |Y|$ (the other case being analogous) and note that $E(A\setminus X, B\setminus Y)\ge (n-|X|)(n-|Y|) - (n-|X|)\Delta = (n-|X|)(n-|Y|-\Delta)$ and hence 
\[k(|X|+|Y|) + E(A\setminus X,B\setminus Y)\ge k(|X|+|Y|) + (n-|X|)(n-|Y|-\Delta).\]
Therefore it suffices to check that $k(|X|+|Y|) + (n-|X|)(n-|Y|-\Delta)\ge kn$ which is equivalent to 
\[k(n-|X|-|Y|)\le (n-|X|)(n-|Y|-\Delta).\]
Fix $|X|+|Y| = t$ and imagine varying $|X|$ with $t\ge|X|\ge|Y|=t-|X|$. By concavity of the right side, it suffices to check when $|X| = t, |Y| = 0$ and $|X| = |Y| = t/2$. In the first case the inequality follows as $k\le n-\Delta$ and in the second case the inequality is equivalent to 
\[k(n-t)\le (n-t/2)(n-t/2-\Delta).\]
Viewing the difference of the right and left hand sides as functions of $t$, the derivative is seen to be $t/2 + k - n + \Delta/2$ and thus the inequality is closest when $t = 2n-2k-\Delta = 3\Delta$. It becomes
\[(n-2\Delta)(n-3\Delta)\le(n-3\Delta/2)(n-5\Delta/2)\]
which is trivially true as $n\ge 3\Delta$.

Now let $G'$ be the corresponding subgraph of $G$ which is exactly $k$-regular. Then by the solution of the van der Waerden conjecture on permanents due to Egorychev \cite{Ego81} and Falikman \cite{Fal81} it follows that the number of perfect matchings in $G'$ is at least $(k/n)^{n}n!\ge(k/e)^n$.
\end{proof}

Finally we will repeatedly require the Chernoff bound. 
\begin{lemma}[Chernoff bound]\label{lem:chernoff}
Let $X$ be either:
\begin{itemize}
    \item a sum of independent random variables, each of which takes values in $[0,1]$
    \item hypergeometrically distributed (with any parameters).
\end{itemize}
Then for any $\delta\ge 0$ we have
\[\mb{P}[X\ge(1+\delta)\mb{E}X]\le\exp(-\delta^2\mb{E}X/(2+\delta)).\]
\end{lemma}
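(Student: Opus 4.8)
The statement to be proved is the Chernoff bound, which is classical; the plan is to run the standard exponential-moment argument in the independent case and then reduce the hypergeometric case to it. Write $\mu = \mb{E}X$. For any $t > 0$, Markov's inequality applied to the nonnegative random variable $e^{tX}$ gives
\[\mb{P}[X \ge (1+\delta)\mu] \le e^{-t(1+\delta)\mu}\,\mb{E}[e^{tX}],\]
so everything reduces to an upper bound on the moment generating function $\mb{E}[e^{tX}]$, followed by optimization over $t$.

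First I would treat the case where $X = \sum_i X_i$ with the $X_i$ independent and $X_i \in [0,1]$ (not necessarily identically distributed or Bernoulli). By convexity of $x \mapsto e^{tx}$ on $[0,1]$ we have the pointwise bound $e^{tX_i} \le 1 - X_i + X_i e^t = 1 + (e^t-1)X_i$; taking expectations and using $1+y \le e^y$ gives $\mb{E}[e^{tX_i}] \le \exp((e^t-1)\mb{E}X_i)$, and multiplying over $i$ yields $\mb{E}[e^{tX}] \le \exp((e^t-1)\mu)$. Substituting into the displayed inequality and choosing $t = \log(1+\delta) \ge 0$ (the minimizer of $(e^t-1) - t(1+\delta)$) gives
\[\mb{P}[X \ge (1+\delta)\mu] \le \exp\bigl(\mu\bigl(\delta - (1+\delta)\log(1+\delta)\bigr)\bigr).\]
It then remains only to check the one-variable inequality $\delta - (1+\delta)\log(1+\delta) \le -\delta^2/(2+\delta)$ for all $\delta \ge 0$, which follows by a routine calculus argument (both sides vanish at $\delta = 0$, and one compares derivatives).

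For the hypergeometric case I would reduce to the independent case rather than repeat the computation: by the classical majorization result of Hoeffding, if $X$ counts the marked items in a size-$m$ sample drawn without replacement from a finite population and $X'$ is the corresponding count drawn with replacement — so $X'$ is a sum of $m$ i.i.d.\ Bernoulli variables with $\mb{E}X' = \mb{E}X = \mu$ — then $\mb{E}[\phi(X)] \le \mb{E}[\phi(X')]$ for every continuous convex $\phi$. Applying this with $\phi(x) = e^{tx}$ gives $\mb{E}[e^{tX}] \le \mb{E}[e^{tX'}] \le \exp((e^t-1)\mu)$, after which the same optimization over $t$ and the same scalar inequality conclude. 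The only real content is getting the moment generating function estimate $\mb{E}[e^{tX_i}] \le \exp((e^t-1)\mb{E}X_i)$ right for general $[0,1]$-valued (rather than Bernoulli) summands — exactly where the linear-interpolation/convexity step enters — and invoking Hoeffding's majorization so as not to need a separate argument without replacement; I expect no genuine obstacle here, only bookkeeping.
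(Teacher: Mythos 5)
Your proof is correct and is the canonical argument; the paper itself gives no proof of this lemma, simply stating it as a standard fact to be used later, so there is nothing to diverge from. The two points of substance in your write-up both check out: the convexity bound $e^{tx}\le 1+(e^t-1)x$ for $x\in[0,1]$ handles general bounded summands, Hoeffding's majorization transfers the moment generating function bound to the hypergeometric case, and the scalar inequality $(1+\delta)\log(1+\delta)-\delta\ge\delta^2/(2+\delta)$ follows from the sharper standard bound with denominator $2+2\delta/3$ since $2+\delta\ge 2+2\delta/3$.
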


\subsection{Proof of the lower bound}\label{sub:lower-bound}
In order to complete the proof of \cref{thm:main}, we devise a procedure that produces a triplet $(\sigma, R_B, R_C)$ where $R_B\subseteq [n]$ and $R_C\subseteq [n]_\mr{o}$ with $|R_B|, |R_C|\le n\exp(n\alpha^c)$ and such that $\sigma$ is coprime. The number of possible outcomes of such a procedure is bounded above by 
$C(n)\exp(n\alpha^{c/2})$ due to the size bounds on $R_B, R_C$ and therefore it suffices to provide a lower bound on the number possible outcomes.

Let $\mc{M}_B$ denote the set of labels $\bigcup_{\substack{S\subseteq \mc{P}_W'\\|S|<k}}\{S\}\cup\{\ast\}$ and let $\mc{M}_C$ denote the set of labels $\bigcup_{\substack{S\subseteq \mc{P}_W'\\|S|<k}}\{S\}\cup\{\ast\}$. We first construct (via a randomized procedure) a pair of maps $\phi_1\colon[n]\to\mc{M}_B$ and $\phi_2\colon[n]\to\mc{M}_C$ which will serve as a template for the procedure producing $(\sigma,R_B,R_C)$.

\begin{lemma}\label{lem:assignment}
There exist assignment functions $\phi_1\colon[n]\to\mc{M}_B$ and $\phi_2\colon[n]\to\mc{M}_C$ such that:
\begin{itemize}
\item $\phi_1(m_1) = \ast$ for $m_1\in B_{\ge k}$ and $\phi_2(m_2) = \ast$ for $m_2\in C_{\ge k}$.
\item If $S\cap T\neq\emptyset$ then $x\in B_S$ implies $\phi_1(x)\neq T$, and $y\in C_T$ implies $\phi_2(y)\neq S$.
\item For $|S_1|,|S_2|<k$, we have
\[\beta_{S_1,S_2}:=|\{m_1\in B_{S_1}\colon\phi_1(m_1)=S_2\}| = |\{m_2\in C_{S_2}\colon\phi_2(m_2)=S_1\}|.\]
\item For all $|S_1|,|S_2|<k$,
\[\beta_{S_1,S_2} = (1\pm\exp(-\Omega((\log n\log\log n)^{1/2})))n\prod_{p\in S_1}p^{-1}\prod_{p\in S_2}p^{-1}\prod_{p\in\mc{P}_W'\setminus (S_1\cup S_2)}(1-2p^{-1})\]
\item $|\{m_1\colon\phi_1(m_1) = \ast\}|\le 2\alpha^{1/2}n$.
\item $|\{m_2\colon\phi_2(m_2) = \ast\}|\le 2\alpha^{1/2}n$.
\end{itemize}
\end{lemma}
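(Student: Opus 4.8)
The plan is to build $\phi_1,\phi_2$ by a greedy/randomized assignment that honors the ``entropy-maximizing'' target fractions, then verify the six bullets via concentration. First, set $\phi_1(m_1)=\ast$ for all $m_1\in B_{\ge k}$ and $\phi_2(m_2)=\ast$ for all $m_2\in C_{\ge k}$; this forces the first bullet and, by \cref{lem:bound-size-2}, contributes at most $\alpha^{1/2}n$ to each $\ast$-count, leaving room for the slack in bullets five and six. For the remaining elements, I would think of the target as a fractional ``doubly stochastic–like'' array: for $|S_1|,|S_2|<k$ with $S_1\cap S_2=\emptyset$, we want roughly a $\prod_{p\in S_2}p^{-1}\prod_{p\in\mc{P}_W'\setminus(S_1\cup S_2)}(1-2p^{-1})/(\text{normalizing factor})$ fraction of $B_{S_1}$ to receive label $S_2$, and symmetrically a comparable fraction of $C_{S_2}$ to receive label $S_1$, chosen so that the two counts are \emph{exactly} equal (bullet three). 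The natural way to get exact equality of $|\{m_1\in B_{S_1}:\phi_1(m_1)=S_2\}|$ and $|\{m_2\in C_{S_2}:\phi_2(m_2)=S_1\}|$ is to first \emph{define} the integers $\beta_{S_1,S_2}$ (the common value), then realize them: choose a uniformly random subset of $B_{S_1}$ of size $\beta_{S_1,S_2}$ for each target $S_2$ (partitioning $B_{S_1}$ over the allowed labels plus possibly $\ast$ for the residual), and independently a uniformly random subset of $C_{S_2}$ of size $\beta_{S_1,S_2}$ for each $S_1$.

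The key step is choosing the $\beta_{S_1,S_2}$ consistently. Here the constraints are: for each $S_1$, $\sum_{S_2}\beta_{S_1,S_2}\le |B_{S_1}|$ (the slack goes to $\ast$), and for each $S_2$, $\sum_{S_1}\beta_{S_1,S_2}\le |C_{S_2}|$, with $\beta_{S_1,S_2}=0$ whenever $S_1\cap S_2\ne\emptyset$. I would take $\beta_{S_1,S_2}$ to be the rounding of $n\prod_{p\in S_1}p^{-1}\prod_{p\in S_2}p^{-1}\prod_{p\in\mc{P}_W'\setminus(S_1\cup S_2)}(1-2p^{-1})$ (which is exactly the fourth bullet's target): the point is that $\sum_{S_2:S_2\cap S_1=\emptyset}\prod_{p\in S_2}p^{-1}\prod_{p\in\mc{P}_W'\setminus(S_1\cup S_2)}(1-2p^{-1}) = \prod_{p\in\mc{P}_W'\setminus S_1}(1-p^{-1})$ by the binomial identity $\sum_{p\in\mc{P}_W'\setminus S_1}$(choose $p\in S_2$ or not)$\, (p^{-1}) + (1-2p^{-1}) = (1-p^{-1})$, truncated to $|S_2|<k$. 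So the row sums match $|B_{S_1}|\approx n\prod_{p\in S_1}p^{-1}\prod_{p\in\mc{P}_W'\setminus S_1}(1-p^{-1})$ up to the error from (a) truncating $|S_2|\ge k$, (b) rounding, and (c) the $(1\pm\alpha^2)$ discrepancy in \cref{lem:bound-size-2}; all three are $\exp(-\Omega((\log n\log\log n)^{1/2}))\cdot n_{S_1}$, so the residual sent to $\ast$ from each bucket is acceptable and, summed over the at most $n^{1/4}$ buckets, stays below $\alpha^{1/2}n$. The same computation with the roles of $S_1,S_2$ swapped (using symmetry of the target formula) handles the column sums against $|C_{S_2}|$. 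Thus one can pick the $\beta_{S_1,S_2}$ as honest nonnegative integers with $\beta_{S_1,S_2}=0$ on intersecting pairs, satisfying the row and column inequalities — bullets two (vacuously, since $\beta=0$ there), three, and four by construction, and five, six from the residual bookkeeping.

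The main obstacle I anticipate is purely the \emph{arithmetic of the error terms}: showing that the truncation to $|S_2|<k$ and the rounding of the $\beta$'s, accumulated over all $O(n^{1/4})$ pairs, still sum to at most (say) $\alpha^{1/2}n$ at each $\ast$, and simultaneously that each individual $\beta_{S_1,S_2}$ has the claimed $(1\pm\exp(-\Omega((\log n\log\log n)^{1/2})))$ multiplicative form — which requires that the target value $n_{S_1,S_2}:=n\prod_{p\in S_1}p^{-1}\prod_{p\in S_2}p^{-1}\prod_{p\in\mc{P}_W'\setminus(S_1\cup S_2)}(1-2p^{-1})$ is not too small, i.e.\ $\gg n^{1/2}$, so that an additive rounding error of $1$ is a negligible multiplicative perturbation. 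This is where the choice $W^k\le n^{1/8}$ and $\prod_{p\le W}(1-2/p)=\Omega(1/(\log W)^2)$ enter: $n_{S_1,S_2}\ge n\cdot W^{-2k}\cdot\Omega((\log W)^{-2}) = n^{1-o(1)}$, comfortably above $n^{1/2}$. Note this construction is entirely deterministic (no probabilistic argument is needed for \emph{this} lemma — the randomness enters later when matching within buckets); so ``whp'' statements are not required here, only the counting identities and the size bounds from \cref{lem:bound-size-2}. Once the $\beta_{S_1,S_2}$ are fixed, defining $\phi_1,\phi_2$ by arbitrarily partitioning each $B_{S_1}$ into blocks of the prescribed sizes (remainder to $\ast$) and likewise each $C_{S_2}$ finishes the proof.
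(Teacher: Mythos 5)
Your construction is genuinely different from the paper's. The paper builds $\phi_1',\phi_2'$ \emph{randomly}: each $m_1\in B_S$ receives label $T$ with probability $\mbm{1}_{S\cap T=\emptyset}\prod_{p\in T}(p-1)^{-1}\prod_{p\in\mc{P}_W'\setminus(S\cup T)}(1-(p-1)^{-1})$, Chernoff (\cref{lem:chernoff}) gives concentration of the two counts, $\beta_{S_1,S_2}$ is then taken to be the \emph{minimum} of the two realized counts, and the surplus on each side is dumped into $\ast$. You instead fix the integers $\beta_{S_1,S_2}$ up front by rounding the target $n_{S_1,S_2}=n\prod_{p\in S_1}p^{-1}\prod_{p\in S_2}p^{-1}\prod_{p\in\mc{P}_W'\setminus(S_1\cup S_2)}(1-2p^{-1})$ and verify feasibility of the row/column sums via the identity $\sum_{S_2\subseteq\mc{P}_W'\setminus S_1}\prod_{p\in S_2}p^{-1}\prod_{p\in\mc{P}_W'\setminus(S_1\cup S_2)}(1-2p^{-1})=\prod_{p\in\mc{P}_W'\setminus S_1}(1-p^{-1})$, which correctly matches $|B_{S_1}|$ via \cref{lem:bound-size-2}. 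This is entirely deterministic and arguably cleaner; it buys you exact equality in the third bullet for free, whereas the paper has to argue the two random counts are within $8\alpha^2\beta_{S_1,S_2}$ of each other before taking the minimum. The paper's randomized route, in turn, avoids having to check row/column feasibility at all, since the labels are tautologically a partition of each $B_S$.

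There is one concrete point you must patch: as written, taking $\beta_{S_1,S_2}$ to be the rounding of $n_{S_1,S_2}$ does not guarantee $\sum_{|S_2|<k}\beta_{S_1,S_2}\le|B_{S_1}|$. \cref{lem:bound-size-2} only gives $|B_{S_1}|\ge(1-\alpha^2)n_{S_1}$, and the deficit you gain from truncating to $|S_2|<k$ is not guaranteed to exceed $\alpha^2 n_{S_1}$, so your prescribed block sizes could overfill $B_{S_1}$ and the partition would not exist. The fix is routine --- deflate the targets, e.g.\ set $\beta_{S_1,S_2}=\lfloor(1-2\alpha^2)n_{S_1,S_2}\rfloor$, which restores $\sum_{S_2}\beta_{S_1,S_2}\le(1-2\alpha^2)n_{S_1}+n^{1/4}\le|B_{S_1}|$ and costs only a $(1\pm O(\alpha^2))$ factor, well within the tolerance of the fourth bullet --- but the issue needs to be acknowledged since without it the construction can literally fail. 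Your identification of the remaining work (truncation and rounding errors summed over the $\le n^{1/4}$ buckets staying below $\alpha^{1/2}n$, and $n_{S_1,S_2}\ge n^{1-o(1)}$ so that rounding is a negligible multiplicative perturbation) is accurate, though note the crude bound (truncation fraction) $\le\alpha^{1/2}$ times $\sum_{|S_1|<k}\prod_{p\in S_1}p^{-1}=O(\log W)$ is not quite small enough for the fifth bullet; you need the sharper estimate that the truncation fraction is $\alpha^{1-o(1)}$, as in \cref{eq:small-set-bound}.
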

\begin{proof}
We construct initial random assignment functions which essentially only fail the third bullet point, and then ``throw out'' a small fraction of values by reassigning them to $\ast$ in order to balance the two sides. For $m_1\in B_{\ge k}$ deterministically assign $\phi_1'(m_1) = \ast$ and for $m_2\in C_{\ge k}$ deterministically assign $\phi_2'(m_2) = \ast$. Given $m_1\in B_S$ with $|S|<k$, for each $|T|<k$ we assign $\phi_1'(m_1)=T$ with probability
\[\mbm{1}_{S\cap T = \emptyset}\cdot\prod_{p\in T}(p-1)^{-1}\prod_{p\in \mc{P}_W'\setminus(S\cup T)}(1-(p-1)^{-1})\]
and otherwise set $\phi_1'(m_1) = \ast$.  Analogously, given $m_2\in C_S$ with $|S|<k$, for $|T|<k$ we assign $\phi_2'(m_2)=T$ with probability
\[\mbm{1}_{S\cap T = \emptyset}\cdot\prod_{p\in T}(p-1)^{-1}\prod_{p\in \mc{P}_W'\setminus(S\cup T)}(1-(p-1)^{-1})\]
and else set $\phi_2'(m_2) = \ast$. Evidently the stated probabilities add to at most $1$ when summing over $|T|<k$, so this is well-defined. 

By \cref{lem:bound-size-2} we have for $|S_1|,|S_2|<k$ which are disjoint that
\begin{align*}
\mb{E}&|\{m_1\in B_{S_1}\colon\phi_1'(m_1)=S_2\}|\\
&= (1\pm\alpha^2)n\prod_{p\in S_1}p^{-1}\prod_{p\in\mc{P}_{W}'\setminus S_1}(1-p^{-1})\prod_{p\in S_2} (p-1)^{-1}\prod_{p\in\mc{P}_W'\setminus (S_1\cup S_2)}(1-(p-1)^{-1})\\
&= (1\pm\alpha^2)n\prod_{p\in S_1}p^{-1}\prod_{p\in S_2}p^{-1}\prod_{p\in\mc{P}_W'\setminus (S_1\cup S_2)}(1-2p^{-1})
\end{align*}
and analogously
\[\mb{E}|\{m_2\in C_{S_2}\colon\phi_2'(m_2)=S_1\}| = (1\pm\alpha^2)n\prod_{p\in S_1}p^{-1}\prod_{p\in S_2}p^{-1}\prod_{p\in\mc{P}_W'\setminus(S_1\cup S_2)}(1-2p^{-1}).\]

Applying Chernoff (\cref{lem:chernoff}) it follows with probability at least $1/2$ that if for $|S_1|,|S_2|<k$ disjoint we set
\[\beta_{S_1,S_2} = \min(|\{m_1\in B_{S_1}\colon\phi_1'(m_1)=S_2\}|,|\{m_2\in C_{S_2}\colon\phi_2'(m_2)=S_1\}|)\]
then
\[\beta_{S_1,S_2} = (1\pm 2\alpha^2)n\prod_{p\in S_1}p^{-1}\prod_{p\in S_2}p^{-1}\prod_{p\in \mc{P}_W\setminus (S_1\cup S_2)}(1-2p^{-1}).\]

To construct $\phi_1$, for each pair of disjoint $|S_1|,|S_2|$ of size less than $k$ assign $\beta_{S_1,S_2}$ values of $S_1$ to map to the label $S_2$. Send all remaining values in $S_1$, if any, to $\ast$ (and set $B_{\ge k} = \ast$ as before). For $\phi_2$, assign $\beta_{S_1,S_2}$ values of $S_2$ to map to the label $S_1$ and send all remaining values in $S_2$, if any, to $\ast$. This is well-defined since clearly
\[\sum_{|S_2|<k}\beta_{S_1,S_2}\le\sum_{|S_2|<k}|\{m_1\in B_{S_1}\colon\phi_1'(m_1)=S_2\}|\le|B_{S_1}|\]
and similar for $\phi_2'$.

The first four properties follow immediately. The final two properties follow from noting that
\[\Big||\{m_1\in B_{S_1}\colon\phi_1'(m_1)=S_2\}|-|\{m_2\in C_{S_2}\colon\phi_2'(m_2)=S_1\}|\Big|\le 8\alpha^2\beta_{S_1,S_2},\]
so the extra amount of values assigned the label $\ast$ is not significant, and by citing the second part of \cref{lem:bound-size-2}.
\end{proof}

To construct $(\sigma, R_B, R_C)$, we define $\beta_{S_1,S_2}$ as in \cref{lem:assignment}, $\beta_\ast^1 = |\{m_1\colon\phi_1(m_1) = \ast\}|$, and $\beta_\ast^2 = |\{m_2\colon\phi_2(m_2) = \ast\}|$. Next we consider the following procedure:
\begin{itemize}
\item Fix $\phi_1$, $\phi_2$ satisfying \cref{lem:assignment} with these values of $\beta$.
\item Choose $\sigma_1$ uniformly randomly among all permutations of $[n]$ with $\sigma_1(B_S)=B_S$ for all $S$ and $\sigma_2$ uniformly randomly among all permutations of $[n]_\mr{o}$ with $\sigma_2(C_S)=C_S$ for all $S$.
\item Let $B_{S_1,S_2} = \{m_1\in B_{S_1}\colon\phi_1(\sigma_1(m_1)) = S_2\}$ and $C_{S_1,S_2} = \{m_2\in C_{S_2}\colon\phi_2(\sigma_2(m_2)) = S_1\}$ for $|S_1|,|S_2|<k$. Furthermore let $R_B = \{m_1\colon\phi_1(\sigma_1(m_1))=\ast\}$ and $R_C = \{m_2\colon\phi_2(\sigma_2(m_2)) = \ast\}$
\item For disjoint $S_1,S_2$ of size at most $k$ with $(S_1,S_2) \neq (\emptyset,\emptyset)$, let $\sigma$ form a uniformly random matching between $B_{S_1,S_2}$ and $C_{S_1,S_2}$ not violating the coprimality condition. 
\item For $(S_1,S_2) = (\emptyset,\emptyset)$ let $\sigma$ be a uniformly random matching between $B_{\emptyset,\emptyset}\cup R_B$ and $C_{\emptyset,\emptyset}\cup R_C$. 
\item Output $(\sigma, R_B, R_C)$.
\end{itemize}
Note that this fully specifies $\sigma$ since if $S_1\cap S_2\neq\emptyset$ we have $x\in B_{S_1}$ implies $\phi_1(x)\neq S_2$ and similar for $\phi_2$. Furthermore, given the output $(\sigma,R_B,R_C)$ we can mostly reconstruct the procedure. In particular, for $|S_1|,|S_2|<k$, $B_{S_1,S_2}$ is the set of values in $B_{S_1}$ mapped to $C_{S_2}$ by $\sigma$ for all $(S_1,S_2)\neq(\emptyset,\emptyset)$, while $B_{\emptyset,\emptyset}$ is the set of values in $B_\emptyset\setminus R_B$ mapped to $C_\emptyset\cup R_C$. This determines $\sigma_1$ up to right-multiplication by some $\xi_1$ which preserves the sets $B_{S_1,S_2}$ for all $|S_1|,|S_2|<k$ as well as $R_B$, and similar for $\sigma_2$.

Therefore, if $X$ is the number of possible outputs $(\sigma,R_B,R_C)$ where $\sigma$ is coprime and $Y$ is the number of ways to run the above procedure where the output is coprime, we have
\begin{equation}\label{eq:reduce-to-matching}
Y\le X\cdot\beta_\ast^1!\beta_\ast^2!\prod_{\substack{|S_1|,|S_2|<k\\S_1\cap S_2=\emptyset}}\beta_{S_1,S_2}!^2\le C_0(n)\cdot n^{\beta_\ast^1+\beta_\ast^2}\prod_{\substack{|S_1|,|S_2|<k\\S_1\cap S_2=\emptyset}}\beta_{S_1,S_2}!^2.
\end{equation}
Now we provide a lower bound on $Y$.

Note that by \cref{lem:prime-division} and the Chernoff bound (\cref{lem:chernoff}), with probability at least $3/4$ we have that for all disjoint $|S_1|,|S_2|<k$ the coprimality bipartite graph between parts $B_{S_1,S_2}$ and $C_{S_1,S_2}$ for $(S_1,S_2)\neq(\emptyset,\emptyset)$ has at most $4W^{-1/2}\beta_{S_1,S_2}$ edges missing from each vertex. Furthermore with probability $3/4$ the non-coprimality bipartite graph between $B_{\emptyset,\emptyset}\cup R_B$ and $C_{\emptyset,\emptyset}\cup R_C$ has at most $4W^{-1/2}\beta_{\emptyset,\emptyset}+|R_B|$ edges missing from each vertex. Using that $|\beta_{\emptyset, \emptyset}|\ge n\exp(-2\log\log n)$ and $|R_B| = \beta_\ast^1\le 2\alpha^{1/2}n$, we see that the maximum degree of the complement bipartite graph is bounded by $(4W^{-1/2}+\alpha^{1/3})\beta_{\emptyset,\emptyset}$, say.

Therefore applying \cref{lem:matchings} shows that there are
\begin{equation}\label{eq:Y-lower-bound}
Y\ge (3/4)^2\beta_\ast^1!\beta_\ast^2!\prod_{|S|<k}|B_S|!\prod_{|S|<k}|C_S|!\cdot\prod_{\substack{|S_1|,|S_2|<k\\S_1\cap S_2=\emptyset}}(\beta_{S_1,S_2}/e)^{\beta_{S_1,S_2}}e^{-4(4W^{-1/2}+\alpha^{1/3})\beta_{S_1,S_2}}
\end{equation}
possible valid ways to run the procedure. Now, given \cref{eq:reduce-to-matching,eq:Y-lower-bound}, a nearly identical calculation as in \cref{sec:upper-bound} demonstrates
\[C_0(n)\ge\exp(-n\alpha^{1/4})n!\bigg(\prod_{p > 2}\frac{(p-1)^{2(1-1/p)}}{p\cdot (p-2)^{(1-2/p)}}\bigg)^n.\]
Finally, using \cref{lem:even-reduction} implies the result.

\bibliographystyle{amsplain0.bst}
\bibliography{main.bib}

\providecommand{\bysame}{\leavevmode\hbox to3em{\hrulefill}\thinspace}
\providecommand{\MR}{\relax\ifhmode\unskip\space\fi MR }
\providecommand{\MRhref}[2]{%
  \href{http://www.ams.org/mathscinet-getitem?mr=#1}{#2}
}
\providecommand{\href}[2]{#2}
\begin{thebibliography}{10}

\bibitem{ARR98}
Noga Alon, Vojtech R\"{o}dl, and Andrzej Ruci\'{n}ski, \emph{Perfect matchings
  in {$\epsilon$}-regular graphs}, Electron. J. Combin. \textbf{5} (1998),
  Research Paper 13, 4.

\bibitem{BP21}
Tom Bohman and Fei Peng, \emph{Coprime mappings and lonely runners},
  arXiv:2109.09860.

\bibitem{Bon36}
Carlo Bonferroni, \emph{Teoria statistica delle classi e calcolo delle
  probabilita}, Pubblicazioni del R Istituto Superiore di Scienze Economiche e
  Commericiali di Firenze \textbf{8} (1936), 3--62.

\bibitem{Ego81}
G.~P. Egorychev, \emph{The solution of the van der {W}aerden problem for
  permanents}, Dokl. Akad. Nauk SSSR \textbf{258} (1981), 1041--1044.

\bibitem{Fal81}
D.~I. Falikman, \emph{Proof of the van der {W}aerden conjecture on the
  permanent of a doubly stochastic matrix}, Mat. Zametki \textbf{29} (1981),
  931--938, 957.

\bibitem{Jac77}
D.~M. Jackson, \emph{The combinatorial interpretation of the {J}acobi identity
  from {L}ie algebras}, J. Combinatorial Theory Ser. A \textbf{23} (1977),
  233--256.

\bibitem{Kee18}
Peter Keevash, \emph{Counting designs}, J. Eur. Math. Soc. (JEMS) \textbf{20}
  (2018), 903--927.

\bibitem{McN22}
Nathan McNew, \emph{private communication}.

\bibitem{Mon19}
Richard Montgomery, \emph{Spanning trees in random graphs}, Adv. Math.
  \textbf{356} (2019), 106793, 92.

\bibitem{Pom22}
Carl Pomerance, \emph{Coprime permutations}, arXiv:2203.03085.

\bibitem{Pom21}
Carl Pomerance, \emph{Coprime matchings}, Integers \textbf{22} (2022), Paper
  No. A2, 9.

\bibitem{PS80}
Carl Pomerance and J.~L. Selfridge, \emph{Proof of {D}. {J}. {N}ewman's coprime
  mapping conjecture}, Mathematika \textbf{27} (1980), 69--83.

\bibitem{Sim21}
Michael Simkin, \emph{The number of $n$-queens configurations},
  arXiv:2107.13460.

\bibitem{OEIS}
N.~J.~A. Sloane, \emph{The {O}n-{L}ine {E}ncyclopedia of {I}nteger {S}equences
  ({O}{E}{I}{S})}.

\bibitem{Sze13}
Endre Szemer\'{e}di, \emph{Is laziness paying off? (``{A}bsorbing'' method)},
  Colloquium {D}e {G}iorgi 2010--2012, Colloquia, vol.~4, Ed. Norm., Pisa,
  2013, pp.~17--34.

\end{thebibliography}

\end{document}